\documentclass[11pt]{article}

\usepackage{amsmath}
\usepackage{amsfonts}
\usepackage{amssymb}
\usepackage{amsmath}
\usepackage{amsthm}
\usepackage{latexsym}
\usepackage{graphicx,cite}
\usepackage{float}
\usepackage{a4wide}

\usepackage{mathtools}
\mathtoolsset{showonlyrefs}


\usepackage{xcolor}
\definecolor{hanblue}{rgb}{0.27, 0.42, 0.81}
\definecolor{red}{rgb}{1.0, 0.0, 0.0}
\usepackage[colorlinks, citecolor=hanblue,linkcolor=red]{hyperref}

\newtheorem{thm}{Theorem}[section]
\newtheorem{lem}[thm]{Lemma}
\newtheorem{prop}[thm]{Proposition}

\newtheorem{problem}[thm]{Problem}

\theoremstyle{definition}
\newtheorem{defn}[thm]{Definition}

\newtheorem{prob}[thm]{Problem}

\theoremstyle{remark}
\newtheorem{rem}[thm]{Remark}

\numberwithin{equation}{section}

\usepackage{tikz,fp,ifthen,fullpage}
\usepackage{pgfmath}
\usetikzlibrary{backgrounds}
\usetikzlibrary{decorations.pathmorphing,backgrounds,fit,calc,through}
\usetikzlibrary{arrows}
\usetikzlibrary{shapes,decorations,shadows}
\usetikzlibrary{fadings}
\usetikzlibrary{patterns}
\usetikzlibrary{mindmap}
\usetikzlibrary{decorations.text}
\usetikzlibrary{decorations.shapes}

\setlength{\parindent}{0pt}

\title{Some minimization problems\\ for planar networks of elastic curves}
\author{Anna Dall'Acqua 
\footnote{Institut f\"{u}̈r Analysis, Fakult\"{a}t f\"{u}r Mathematik und Wirtschaftswissenschaften, Universit\"{a}̈t Ulm, 89081 Ulm, Germany}
\and Alessandra Pluda
\footnote{Fakult\"at f\"ur Mathematik, Universit\"at Regensburg, Universit\"atsstrasse 31, 
93053 Regensburg, Germany}
}

\begin{document}

\maketitle

\begin{abstract}
In this note we announce some results that will appear in~\cite{danovplu}
on the minimization of the functional 
$F(\Gamma)=\int_\Gamma k^2+1\,\mathrm{d}s$,
where $\Gamma$ is a network of three curves with fixed equal angles at the two junctions.
The informal description of the results is 
accompanied by a partial review of the theory of elasticae
and a diffuse discussion about the onset of interesting variants of the original problem
passing from curves to networks.
The considered energy functional $F$ is given by 
the elastic energy and a term that
penalize the total length of the network. We will show that penalizing the length is
tantamount to fix it.
The paper is concluded with the explicit computation of the penalized elastic energy
of the  \lq\lq Figure Eight\rq\rq, 
namely the unique closed elastica with self--intersections (see Figure~\ref{otto}). 
\end{abstract}


\textbf{Keywords:}  Elastic energy, networks, Euler-Lagrange equations, fourth order.

\bigskip

\textbf{MSC (2010):}   49-02, 49J05, 49J45, 53A04.

\section{Introduction}

A $N$--network is the union of a
finite number $N$ of sufficiently smooth planar curves $\gamma^i$ 
whose end points meet in junctions. 
We consider the elastic energy functional for a $N$--network  
$\Gamma$ with curves  $\gamma^i \in H^2$ defined as:
\begin{equation}\label{funzionale}
E(\Gamma):=\int_{\Gamma} k^2\,{\rm{d}}s=\sum_{i=1}^N\int_{\gamma^i}\left(k^i\right)^2\,{\rm{d}}s^i\,,
\end{equation}
where $k^i$ and $s^i$ denote respectively the scalar curvature and the arclength parameter of $\gamma^i$.

We are interested in problems that involve the minimization of 
the energy $E$ on networks.

Before studying the case of networks,  we first recall some known results in the case of curves.
In particular, we will see in Section~\ref{curves} 
that the minimization of the elastic energy on closed regular curves is not a well posed problem. 
A possible solution, listed between 
others, 
is then to penalize or to fix the length of the curve. 
In the first case we speak of the \emph{penalized} problem 
and in the second of the \emph{constrained} problem.
It is known that 
these two problems are equivalent (see Section~\ref{secresc}).
In the case of networks an ill posedness issue appears also if we consider
the penalized or the constrained problem. 
At the beginning of Section~\ref{sezionenetwork} 
thanks to a comparison with the
case of curve
we give a possible condition that makes the problem
 well posed also when we consider  networks.
 
Then we describe in some details the 
Theta--networks and the relatives results proved in~\cite{danovplu}.
 
Section~\ref{secEL} is devoted to the computation of the Euler-Lagrange equations.
These computations are directly applicable to networks in $\mathbb{R}^n$. 
Also the variational conditions at the junctions are presented.

In Section~\ref{secresc} the behaviour under rescaling
of the penalized functional is studied and it is shown that 
in fact the penalized and the constrained problem are equivalent. 
This is done actually in quite a general framework that might apply 
to  other geometric minimization problems.

Finally in the last section it is contained the new result of this note.
Using the  
representation given in \cite{explicitelasticae}
of the critical points of the penalized elastic energy, 
we compute the energy of the 
\lq\lq Figure Eight\rq\rq~(see Figure~\ref{otto}), 
namely the unique closed elastica (critical point of the elastic energy with a length constraint) with self--intersections. 
This computation is crucial for the arguments \cite{danovplu} as we will explain below.

We conclude this introduction pointing out to the reader that 
in Sections~\ref{curves} and~\ref{sezionenetwork} we state also some problems
that to the authors' knowledge 
have
not been studied in the literature yet.

\section{The case of curves}\label{curves}

Let us now start by fixing the precise notation that we will use in the whole paper.

When we consider a curve $\gamma$, we mean
a parametrization  $\gamma:[0,1]\to\mathbb{R}^2$.
A curve is of class $C^k$ (or $H^k$) with $k=1,2,\ldots$
if it admits a parametrization $\gamma$ of class $C^k$ (or $H^k$,  respectively).
We remind that a curve is said to be regular 
if $\vert \partial_x \gamma(x) \vert\neq 0$ for every $x\in[0,1]$.
Denoting with  $s$ the arclength parameter, 
we have  $\partial_s = \frac{1}{|\partial_x \gamma(x)|} \partial_x$. 

Consider a regular smooth curve $\gamma$, 
then the unit tangent vector of $\gamma$ is
$$
{\tau}:=\partial_{s}\gamma=\frac{\partial_x\gamma}{\left|\partial_x\gamma\right|} 
\,.
$$
The unit normal vector $\nu$ to $\gamma$ is defined 
as the counter-clockwise rotation of $\pi/2$ of $\tau$.
The curvature vector of $\gamma$ is defined as 
\begin{equation}\label{curvvector}
\boldsymbol{\kappa}:= \partial_{ss}\gamma
=k\nu=\partial_s \tau
=\frac{\partial_{xx}\gamma}{\left|\partial_x\gamma\right|^{2}}
 -\frac{\left\langle \partial_{xx}\gamma,\partial_{x}\gamma\right\rangle\partial_x\gamma}{\left|\partial_x\gamma\right|^{4}}\,.
 \end{equation}
with $k$ the scalar curvature given by $k=|\partial_{ss}\gamma|$.  
The following relations, direct consequences of the definitions above, will be relevant
in the computation of Section~\ref{secEL}:
\begin{align}\label{dercurvatura}
\partial_s \nu=-k\tau\,, \quad \partial_s\boldsymbol{\kappa}=\partial_sk \ \nu-\left(k\right)^{2}\tau\, 
\mbox{ and } \;\partial_{ss}\boldsymbol{\kappa}
=\partial_{ss} k \ \nu-\left(k\right)^{3}\nu-3k \partial_{s}k \ \tau\,.
\end{align}
Moreover, we will adopt the following convention for integrals,
$$
\int_{\gamma}f(\gamma,\tau,\nu,...)\,\mathrm{d}s
=\int_{0}^1f(\gamma,\tau,\nu,...)\vert \partial_x\gamma\vert\,\mathrm{d}x\,,
$$
as the arclength measure is given by $\mathrm{d}s=\vert \partial_x\gamma\vert\mathrm{d}x$
on every curve $\gamma$.

\medskip

For a regular curve $\gamma$ of class $H^2$ 
the elastic energy is defined as 
\begin{equation}\label{energiacurve}
E(\gamma):=\int_{\gamma}k^2\,{\rm{d}}s\,.
\end{equation}
It is nice to mention that this energy
was considered already by Bernoulli  to 
model elastic rods, moreover the first solution of the associated variational problem was given by Euler, both contributions around 1743 (see~\cite{Truesdell}).

\medskip

Classically an \emph{elastica} is a 
critical point of the functional~\eqref{energiacurve}
defined on regular curves of fixed length satisfying given first order boundary data. 
When the constraint on the length is removed, one usually speaks of free elasticae.
The closed (free) elasticae are classified thanks to the results by Langer and Singer 
in the beautiful paper~\cite{langsing1}.
In particular in~\cite[Theorem 0.1 (a)]{langsing1} they prove that 
the circle and  the  ``\emph{Figure Eight}"  (or a multiple cover of one of these two)
are the unique closed planar elasticae.

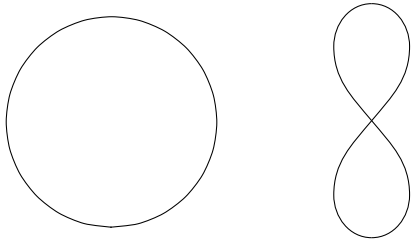
\begin{figure}[H]
\begin{center}
\begin{tikzpicture}[scale=1.4]
\draw[color=black,scale=0.5,domain=-3.14: 3.15,
smooth,variable=\t,shift={(0,0)},rotate=0]plot({2.*sin(\t r)},
{2.*cos(\t r)}) ;
\draw[white]
(-1,-1.1)--(1,-1.1);
 \end{tikzpicture}\qquad\qquad
 \begin{tikzpicture}[scale=0.9]
\draw
(0,1.73)to[out= 0,in=90, looseness=1] (0.56,1.1)
(0.56,1.1)to[out= -90,in=50, looseness=1] (0,0);
\draw
(0,1.73)to[out= 180,in=90, looseness=1] (-0.56,1.1)
(-0.56,1.1)to[out= -90,in=130, looseness=1] (0,0);
\draw[rotate=180]
(0,1.73)to[out= 0,in=90, looseness=1] (0.56,1.1)
(0.56,1.1)to[out= -90,in=50, looseness=1] (0,0);
\draw[rotate=180]
(0,1.73)to[out= 180,in=90, looseness=1] (-0.56,1.1)
(-0.56,1.1)to[out= -90,in=130, looseness=1] (0,0);
\end{tikzpicture}
\end{center}
\caption{The circle and the ``Figure Eight" are the unique
closed planar elasticae.}\label{otto}
\end{figure}

Let us now restrict from stationary points 
to minimizers. It is natural to first consider
the minimization of the elastic energy for one \emph{closed} curve.
We notice that without further conditions this problem is not well posed since
$$
\inf \left\lbrace E(\gamma)\;\big\vert\; \gamma\;\text{is a closed regular curve of
class}\,H^2\right\rbrace =0\,,
$$
and the infimum is not attained. Indeed consider a sequence of circles $\mathcal{C}_R$ 
with radius $R$,
then $E(\mathcal{C}_R)$ goes to zero as $R\to \infty$ and the value zero is not attained,
since the elastic energy has value zero only on (pieces of) straight lines.

This first remark makes it clear that an additional constraint in the definition 
of the problem is needed. 
There are many choices for this extra condition, and hence many variants of the 
problem. We give here some possibilities.

\begin{enumerate}
\item One can decide to penalize or 
to fix the length of the curve. We will see later in Lemma~\ref{probequivalenti} 
that these two problems are in fact equivalent. 
In both cases, it is not difficult to prove existence of minimizers by
the direct method in the calculus of variation using that the functional
is lower semicontinuous in $H^2$. In the case of the penalized problem
described by the functional $F(\gamma):=E(\gamma)+L(\gamma)$ it is useful
to consider a minimizing sequence parametrized with constant speed equal to the length. Then
to achieve compactness only one extra argument is needed:
a bound from below on the length.
In the case of a simple closed curve Gauss-Bonnet theorem
gives that $\int_\gamma k\,{\rm{d}}s=2\pi$. It is possible to generalise this result in the case of
curves with self--intersections obtaining $\int_\gamma \vert k\vert\,{\rm{d}}s\geq 2\pi$ 
(see~\cite[Appendix A]{danovplu}).
Combing this estimate with H\"older inequality we get
\begin{equation}\label{a}
2\pi\leq \int_{\gamma}\vert k\vert\,{\rm{d}}s\leq
\left(\int_{\gamma}k^2\,{\rm{d}}s\right)^{1/2}\left(\int_\gamma 1\,{\rm{d}}s\right)^{1/2}\,.
\end{equation}
and as a consequence
\begin{equation}\label{b}
F(\gamma)\geq E(\gamma)\geq \frac{4\pi^2}{L(\gamma)}\, ,
\end{equation}
which yields a lower bound on the length. It follows from the arguments in \cite{langsing,langsing1} 
that the unique (up to isometries of $\mathbb{R}^2$) minimizer to the penalized functional
$F(\gamma)$ between all closed regular $H^2$
curves is the circle of radius $1$ and the minimum of the energy is $4\pi \approx 12.56637$. 
In Section \ref{secequiv} 
we will see also that the unique minimizer of the more general problem in which one replace
$F$ by the functional
$F_{\delta}(\gamma):=E(\gamma)+\delta L(\gamma)$ with $\delta >0$, 
is an appropriate rescaling (depending on $\delta$) of the unit circle.

In~\cite{danovplu} we minimize the 
functional $F$ on drops that are defined as follows.

\begin{defn}\label{defdrop}
We say that a regular curve $\gamma:[0,1]\to\mathbb{R}^2$ 
is a \emph{drop} if $\gamma(0)=\gamma(1)$. 
\end{defn}
\begin{prob}\label{problemdrops}
Does a minimizer to 
$\inf\{F(\gamma)\,\vert \;\gamma\, \text{ is a drop of class }H^2\}$
exist?
\end{prob}
In~\cite{danovplu} we 
give a positive answer to this question, moreover in~\cite[Proposition 3.12]{danovplu}
we prove that one of the two drops of a proper rescaling of the 
the ``Figure Eight" (see Figure~\ref{otto}) is the unique minimizer 
(up to isometries of $\mathbb{R}^2$) 
for Problem~\ref{problemdrops}. 
Following the approach introduced in~\cite{explicitelasticae}
in Section~\ref{bulgari} of this work we show that the energy of the minimizer 
is $\approx10.60375$.

We want to underline the difference between drops and closed curves:
if a curve $\gamma$ of class $H^2$ is closed then $\gamma$ has a $1$-periodic $C^1$ 
extension to $\mathbb{R}$. 
Whereas for $H^2$ drops
at the point $\gamma(0)=\gamma(1)$ the derivatives do not have to match. 
We allow for instance the possibility of having an angle 
or a cusp, hence we are requiring less regularity on drops than on closed curves.
Working in the larger class of drops, 
the energy of the minimizer (not surprisingly) decreases.

\item Another option is requiring that the area enclosed 
by the simple closed curve is bounded by some fixed constant.  
To be more precise consider a smooth,
simply connected and bounded open set $\Omega$,
bounded by a 
Jordan curve $\gamma$. Then $E(\partial\Omega)$
is nothing else than $E(\gamma)$ with $E$ defined as in~\eqref{energiacurve}.
Calling $A(\Omega)$ the area of $\Omega$ and given $A_0>0$,
one looks for
$$
\min \left\lbrace E(\partial\Omega) \big\vert\; A(\Omega)\leq A_0\right\rbrace\,,
$$
or equivalently (see~\cite{bucurhenrot} or Lemma~\ref{probequivalenti} )
$$
\min  \left\lbrace E(\partial\Omega)+A(\Omega)\right\rbrace\,.
$$
In~\cite{bucurhenrot, ferkawni} it is proved that the unique minimizer is the disc 
and it is also shown the new isoperimetric inequality 
$$
E^2(\partial\Omega)A(\Omega)\geq 4 \pi^3\,.
$$
The existence result in this case is more difficult than in the case of length penalization.
The strategy of proof due to Bucur and Henrot has also lead the authors
to study the case in which the curve 
that bounds $\Omega$ is not globally $C^1$, but it has one cusp. 
Precisely, they minimize the elastic energy on \lq\lq closed loops without self-intersections points, 
which are smooth except at one point, where the tangents are opposite\rq\rq, 
cited from~\cite{bucurhenrot}.
They call these curves also drops. 
The main difference to our definition is that we allow for any angle 
between the tangent vectors at the junction. 
As we just wrote above, in the case of the elastic energy 
with length penalisation considered in~\cite{danovplu} the optimal drop is given 
by the (upper) half of the \lq\lq Figure Eight\rq\rq (see Figure~\ref{otto}) 
and the angle between the tangents at the junction 
is $\approx 80$ degrees (see~\cite{explicitelasticae}).
It is worth to mention 
that in~\cite{bucurhenrot} the authors cannot use the
result by~\cite{langsing1} on the classification of the elasticae because of the
area constraint, what we instead heavily use in~\cite{danovplu}.

\item A natural possibility is also not to allow the curves to stay in the whole plane, but 
asking the curves to be \emph{confined} in a open and bounded subset $\Omega$, as done 
in~\cite{domuro,masnou}. 
In~\cite{masnou} existence is shown by the direct method in the calculus of variation 
in the class of confined curves with possibly tangential self-intersections. 
In this situation the most challenging part is the study of regularity 
and qualitative properties of the minimizers. 
In~\cite{masnou} it is proved that a minimal curve is globally $W^{3,\infty}$, 
$\partial_s k$ is a function of bounded variation and 
that each minimizer is smooth away from 
self-intersections and 
contact points. Moreover the authors are able to show that every self-intersection point of an optimal curve has multiplicity not greater than two and that if $\Omega$ is convex, then every optimal curve surrounds a convex set.
\end{enumerate}

Consider now an ``open'' curve $\gamma:[0,1]\to\mathbb{R}^2$
with end points $\gamma(0)\neq\gamma(1)$.
Also in this case the infimum of the energy $E$ between all 
``open'' regular curves of class $H^2$ is zero. In this setting, it is trivially attained  
at every straight segment, hence it is actually a minimum.
To make the problem more interesting one can impose at the boundary
some conditions that force all the competitors not to be segments.
As the elastic functional involves the curvature (in arclength the 
second derivative of the parametrization)
two conditions at both endpoints are expected. The most common choice is the following. 
Fix the points $\gamma(0)=P_0$, $\gamma(1)=P_1$ and the
unit tangent vectors $\tau(0)=\tau_0$, $\tau(1)=\tau_1$ (with $\tau_0$ and $\tau_1$ that do not both have the direction of the segment joining 
$P_0$ and $P_1$). These boundary conditions are usually called \textit{clamped} or \textit{Dirichlet} boundary conditions. 
If $\tau_0$ and $\tau_1$ are in the opposite half--plane with respect to the 
line that pass through the points $P_0$ and $P_1$ we can call this problem
the \textit{minimal elastic lens problem} (see Figure~\ref{lens}).
There are also other possible choices, 
motivated by the study of boundary value problems, 
see for instance~\cite{DG1,DG2,linner,linner2,mandel,miuranew}. 

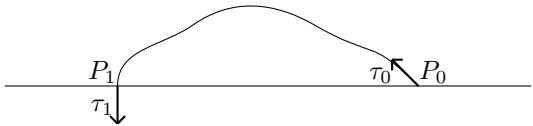
\begin{figure}[h]
\begin{center}
\begin{tikzpicture}
 \draw[thick, shift={(1.65,0.35)}, rotate=45]
(0,0)to[out= -45,in=135, looseness=1] (0.1,-0.1)
(0,0)to[out= -135,in=45, looseness=1] (-0.1,-0.1);
\draw[thick, rotate=-180, shift={(2,0.5)}]
(0,0)to[out= -45,in=135, looseness=1] (0.1,-0.1)
(0,0)to[out= -135,in=45, looseness=1] (-0.1,-0.1);
\draw[thick]
(2,0)--(1.65,0.35)
(-2,0)--(-2,-0.5);
\draw
(-3.5,0)--(3.5,0);
\draw 
(-2,0)
to[out=90, in=-145, looseness=1] (-1, 0.81)
to[out=35,in=150,  looseness=1] (0.62,0.81)
to[out=-30,in=135, looseness=1.5] (2,0);
\path[font=\small]
(2.2,-0.1)[above] node{$P_0$}
(-2.2,-0.1)[above] node{$P_1$}
(-2.2,-0.05)[below] node{$\tau_1$}
(1.5,-0.1)[above] node{$\tau_0$};
\end{tikzpicture}
\caption{A competitor for the minimal ``elastic lens'' problem given as data $P_0,P_1,\tau_0,\tau_1$}\label{lens}
\end{center}
\end{figure}

Recently also the associated 
obstacle problem has been studied, see~\cite{dade} and~\cite{Miura}. 
This problem is similar to the one of confined elastica discussed above, but less geometrical. 
Also in this case minimizers are globally $W^{3,\infty}$ and concave. 

\smallskip

Another interesting problem would be to minimize the elastic energy under a fixed isoperimetric ratio, that is among curves with fixed quotient between length and enclosed area.\footnote{This problem was proposed to the authors by Giulio Giusteri and Luca Lussardi.} It seems that this problem has not  been studied in this situation yet, whereas the corresponding two-dimensional problem for the Willmore energy has been considered for instance in~\cite{schygulla}.

\smallskip

To conclude this short summary about elastic curves we want to underline the fact that 
the literature on the topic is much wider than what we present, for instance we 
refer the reader also to~\cite{bellmu, Sachkov}.

\smallskip

Moreover for completeness, let us say that there is really an extensive literature 
on the elastic flow (the $L^2$-gradient flow of the elastic energy) both for closed and open curves. 
Since it is difficult to give just few references and this is not the subject of this paper, 
we refrain from doing it here.

\section{Networks}\label{sezionenetwork}

We have decided to study the problem for networks because we expect the onset of
new phenomena with respect to the case of the curves. Let us first define precisely
what we mean by network.

\begin{defn}\label{network}
A \textit{$N$--network} is a connected set in the plane, 
union of $N$ curves $\gamma^i:[0,1]\to\mathbb{R}^2$,
whose end points meet in junctions.  
Moreover we say that a $N$--network is \textit{regular} if each curve $\gamma^i$ is regular,
and it is \textit{of class $H^2$} if each curve of the network is of class $H^2$.
\end{defn}

Motivated by what we learnt on curves, one would expect that by adding a term 
that penalizes the length to the elastic energy and, 
for instance, minimizing on the set of networks composed 
by  three regular curves attached in two junctions, we should get a well-posed problem.
Instead, the problem
$$
\inf \{F(\Gamma)\,\vert \, \Gamma \mbox{ regular $3$--network}\} \mbox{ with }F(\Gamma):=E(\Gamma)+L(\Gamma)\,,
$$
and $L(\Gamma)$ the total length of the network, is not well posed. 
Indeed, the infimum is zero and is clearly not attained in the class of regular networks.
A trivial minimizing sequence is given by three equal segments whose length goes to zero. 
We can also give an example of a minimizing sequence with curves 
that meet only at the junctions:
consider a sequence of networks $\Gamma_\varepsilon$ 
composed by two arcs of circle of radius $1$  and of length $\varepsilon$
that meet with a segment (of length $\sqrt{2}\sqrt{1-\cos\varepsilon}\sim\varepsilon$)
with angles of amplitude $\frac\varepsilon2$. 
The energy of $\Gamma_\varepsilon$ is
$F(\Gamma_\varepsilon)=4\varepsilon+\sqrt{2}\sqrt{1-\cos\varepsilon}$ and it converges to zero
when $\varepsilon\to 0$. 

In~\eqref{b} we have seen  that in 
the case of the same functional minimized among closed regular
curves $\gamma$ of class $H^2$ 
there is a lower bound on the energy.
By doing a computation similar to the one in \eqref{a} 
we see now which conditions on the network could give rise to a lower bound on the energy 
and possibly to a well-posed problem on networks. 
The main tool is again the Gauss--Bonnet Theorem and 
for ease of presentation we consider here only 
embedded $3$-networks, i.e. $3$-networks without intersections (except at
the junctions) and without self-intersections.
As in the case of closed curves if the network is not embedded a more refine version of
Gauss--Bonnet is needed, see~\cite{danovplu}.
 An embedded $3$--network can be seen as
two closed curve (with a common piece) 
each having two external angles $\theta_1,\theta_2$ with  
$\theta_i\in [-\pi,\pi]$. Then for each closed curve that compose the network
 the estimate that comes from Gauss--Bonnet becomes
\begin{equation}\label{c}
\int_{\gamma}\vert k\vert\,{\rm{d}}s\geq 2\pi-\theta_1-\theta_2\geq 0\,.
\end{equation}
This shows that by fixing the angles $\theta_i\neq \pi$ we get an uniform bound 
from below for the total curvature, and consequently to
the energy, as in the case of one closed smooth curve.

\subsection{Penalized length and fixed angles}
The previous discussion suggests that
a natural choice for the 
problem is minimizing the elastic energy~\eqref{funzionale}
on the class of regular $H^2$
$N$--networks $\Gamma$,
penalizing the global length of the network and fixing the angles at the junctions. 
Then the considered energy functional is:
\begin{equation}\label{funzionalenetworks}
F(\Gamma):=
\sum_{i=1}^N\int_{\gamma^i}\left(k^i\right)^2\,{\rm{d}}s^i + \sum_{i=1}^N\int_{\gamma^i}1\,{\rm{d}}s^i \,.
\end{equation}

\begin{figure}[h]
\begin{center}
\begin{tikzpicture}[scale=1.3]
\draw
(0,0)to[out= 0,in=-45, looseness=1](1,1)
(1,1)to[out= 135,in=90, looseness=1](0,0);
\draw[rotate=90]
(0,0)to[out= 0,in=-45, looseness=1](1,1)
(1,1)to[out= 135,in=90, looseness=1](0,0);
\draw[rotate=180]
(0,0)to[out= 0,in=-45, looseness=1](1,1)
(1,1)to[out= 135,in=90, looseness=1](0,0);
\draw[rotate=270]
(0,0)to[out= 0,in=-45, looseness=1](1,1)
(1,1)to[out= 135,in=90, looseness=1](0,0);
\end{tikzpicture}\qquad
\begin{tikzpicture}[scale=1.2]
\draw[rotate=36, scale=0.61]
(0.76,0)to[out= 120,in=-48, looseness=1](0.23,0.72)
to[out= -162,in=24, looseness=1](-0.61,0.44)
to[out= -96,in=96, looseness=1](-0.61,-0.44)
(0.76,0)to[out= -120,in=48, looseness=1](0.23,-0.72)
to[out= 162,in=-24, looseness=1](-0.61,-0.44);
\draw
(-1.2,-0.87)to[out= -54,in=-168, looseness=1](-0.37,-1.15)
(-1.22,0)to[out= -120,in=126, looseness=1](-1.2,-0.87);
\draw[rotate=72]
(-1.2,-0.87)to[out= -54,in=-168, looseness=1](-0.37,-1.15)
(-1.22,0)to[out= -120,in=126, looseness=1](-1.2,-0.87);
\draw[rotate=-72]
(-1.2,-0.87)to[out= -54,in=-168, looseness=1](-0.37,-1.15)
(-1.22,0)to[out= -120,in=126, looseness=1](-1.2,-0.87);
\draw[rotate=144]
(-1.2,-0.87)to[out= -54,in=-168, looseness=1](-0.37,-1.15)
(-1.22,0)to[out= -120,in=126, looseness=1](-1.2,-0.87);
\draw[rotate=-144]
(-1.2,-0.87)to[out= -54,in=-168, looseness=1](-0.37,-1.15)
(-1.22,0)to[out= -120,in=126, looseness=1](-1.2,-0.87);
\draw[ rotate=72]
(-1.22,0)to[out= 0,in=180, looseness=1](-0.45,0);
\draw[rotate=-72]
(-1.22,0)to[out= 0,in=180, looseness=1](-0.45,0);
\draw[rotate=144]
(-1.22,0)to[out= 0,in=180, looseness=1](-0.45,0);
\draw[rotate=-144]
(-1.22,0)to[out= 0,in=180, looseness=1](-0.45,0);
\draw
(-1.22,0)to[out= 0,in=180, looseness=1](-0.45,0);
\end{tikzpicture}
\caption{A $4$--network with all angles 
of $90$ degrees and a
$15$--network with all angle 
of $120$ degrees}
\end{center}
\end{figure}
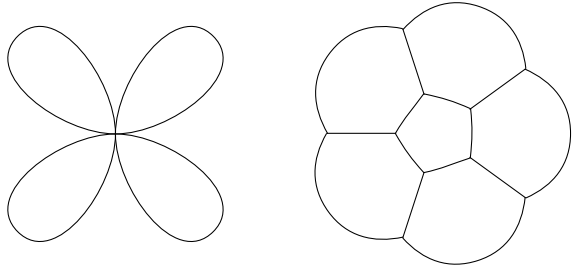

Then, the problem we are interested in is the following:
\begin{problem}\label{Pgeneral}
Is
\begin{align*}
\inf\lbrace F\left(\Gamma\right)\vert\; &\Gamma\;\text{is a regular $N$--network of class}\; H^2 \\
&\text{whose curves meet in junctions with \emph{fixed} angles}\rbrace\,,
\end{align*}
attained?
\end{problem}

A simplification of the problem could be requiring that:
\begin{enumerate}
\item the angles at the junctions
are  fixed  in such a way that  the sum of the unit tangent vectors of the joining curves at the junctions
is equal to zero;
\item the curves meet only in triple junctions with angles of $120$ degrees
(a particular case of the previous assumption).
\end{enumerate} 
These are possible choices, not really justified by variational evidence.

Moreover we want to mention that to penalize the length and fix 
the angles is not the only reasonable possibility in this context. For instance 
one could also fix the length of each curve separately in an appropriate way 
that ensures a lower bound on the energy.

\subsection{Theta--networks}\label{theta}

In~\cite{danovplu} we study a particular case of Problem~\ref{Pgeneral}
for regular $3$--networks.
\begin{defn}
A Theta--network is a regular $3$-network of class $H^2$ such that the three 
curves 
$\gamma^i:\left[0,1\right]\rightarrow \mathbb{R}^2$, $i\in\{1,2,3\}$, meet in two triple junctions
forming angles of $120$ degrees.
\end{defn}

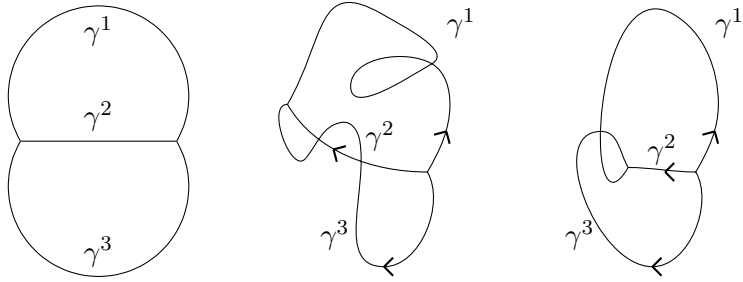
\begin{figure}[H]
\begin{center}
\begin{tikzpicture}[scale=1.2]
\draw[color=black,scale=1,domain=-2.09: 2.09,
smooth,variable=\t,shift={(0,0)},rotate=0]plot({1.*sin(\t r)},
{1.*cos(\t r)}) ; 
\draw[color=black,scale=1,domain=-2.09: 2.09,
smooth,variable=\t,shift={(0,-1)},rotate=180]plot({1.*sin(\t r)},
{1.*cos(\t r)}) ; 
\draw
(-0.87,-0.5)--(0.87,-0.5);
\path[shift={(0,0)}] 
 (0,-0.5)[above] node{$\gamma^2$}
 (0,1)[below] node{$\gamma^1$}
 (0,-2)[above] node{$\gamma^3$};
\end{tikzpicture}\qquad\;
\begin{tikzpicture}[scale=1.2]
\draw[shift={(0,0)}] 
(-1.73,-1.8)
to[out= 180,in=-80, looseness=1] 
(-2,-0.4) 
to[out= 100,in=10, looseness=1] 
(-2.2,-0.2) 
to[out= -170,in=50, looseness=1] 
(-2.6,-0.6) 
to[out= -130,in=180, looseness=1] 
(-2.8,0) 
to[out= 60,in=150, looseness=1.5] (-1.75,1) 
(-2.8,0)
to[out=-60,in=180, looseness=0.9] (-1.25,-0.75)
(-1.75,1)
to[out= -30,in=30, looseness=0.9](-1.2,0.45)
(-2.1,0.2)
to[out= -90,in=-150, looseness=0.9] 
(-1.2,0.45)
(-2.1,0.2)
to[out= 90,in=150, looseness=0.9] 
(-1.2,0.45)
to[out=-30,in=90, looseness=0.9] (-1,0)
to[out= -90,in=60, looseness=0.9] (-1.25,-0.75)
to[out= -60,in=0, looseness=0.9](-1.73,-1.8);
\path[shift={(0,0)}] 
 (-1.5,-0.35)[left] node{$\gamma^2$}
 (-0.6,.9)[left] node{$\gamma^1$}
 (-2,-1.45)[left] node{$\gamma^3$};
  \draw[thick, scale=1, shift={(-2.29,-0.5)}, rotate=60]
(0,0)to[out= -45,in=135, looseness=1] (0.1,-0.1)
(0,0)to[out= -135,in=45, looseness=1] (-0.1,-0.1);
\draw[ thick, shift={(-1.04,-0.3)}, scale=1, rotate=-30]
(0,0)to[out= -45,in=135, looseness=1] (0.1,-0.1)
(0,0)to[out= -135,in=45, looseness=1] (-0.1,-0.1);
\draw[thick, shift={(-1.73,-1.8)}, scale=1, rotate=90]
(0,0)to[out= -45,in=135, looseness=1] (0.1,-0.1)
(0,0)to[out= -135,in=45, looseness=1] (-0.1,-0.1);
 \end{tikzpicture}\qquad
\begin{tikzpicture}[scale=1.2]
\draw[shift={(0,0)}] 
(-2.3,-0.3) 
to[out= 0,in=120, looseness=1] (-2,-0.7) 
to[out= -120,in=150, looseness=1.5] (-1.5,1) 
(-2,-0.7)
to[out=0,in=180, looseness=0.9] (-1.25,-0.75)
(-1.5,1)
to[out= -30,in=90, looseness=0.9] (-1,0)
to[out= -90,in=60, looseness=0.9] (-1.25,-0.75)
to[out= -60,in=0, looseness=0.9](-1.73,-1.8)
(-1.73,-1.8)to[out=180,in=180, looseness=1] (-2.3,-0.3) ;
 \draw[thick, scale=1, shift={(-1.59,-0.75)}, rotate=90]
(0,0)to[out= -45,in=135, looseness=1] (0.1,-0.1)
(0,0)to[out= -135,in=45, looseness=1] (-0.1,-0.1);
\draw[ thick, shift={(-1.04,-0.3)}, scale=1, rotate=-30]
(0,0)to[out= -45,in=135, looseness=1] (0.1,-0.1)
(0,0)to[out= -135,in=45, looseness=1] (-0.1,-0.1);
\draw[thick, shift={(-1.73,-1.8)}, scale=1, rotate=90]
(0,0)to[out= -45,in=135, looseness=1] (0.1,-0.1)
(0,0)to[out= -135,in=45, looseness=1] (-0.1,-0.1);
\path[shift={(0,0)}] 
 (-1.35,-0.5)[left] node{$\gamma^2$}
 (-0.6,.9)[left] node{$\gamma^1$}
 (-2.26,-1.45)[left] node{$\gamma^3$};
 \end{tikzpicture}\qquad
\end{center}
\caption{Three examples of Theta--networks.}
\end{figure}

Notice that we do not require nor the curve of a Theta--network to be injective
neither the network to be embedded, hence intersections between the curves or
self--intersections of the curves can occur.

\begin{prob}\label{Pdoppiabolla}
Does a minimizer to $\inf\lbrace F\left(\Gamma\right)\vert\; \Gamma\;
\text{is a Theta--network}\;\rbrace\,$ 
exists?
\end{prob}

We discuss here  in an informal way the result given in~\cite{danovplu} 
on the existence and characterization of minimizers for Problem~\ref{Pdoppiabolla}.  
With an estimate in the style of~\eqref{a},~\eqref{b} 
we see in~\cite[Remark 4.13]{danovplu} that for every Theta--network $\Gamma$
$$
F(\Gamma)\geq 4\pi\,.
$$
Now, consider an equibounded sequence $\Gamma_n$
of Theta--networks, then there exists (up to subsequence) 
an $H^2$--weak limit $\Gamma_\infty$
of $\Gamma_n$.
Again, with arguments based on Gauss--Bonnet Theorem, in~\cite{danovplu} it is shown that
the length of (at most) one of the three curves of $\Gamma_n$
can go to zero as $n\to\infty$. In other words 
the class of Theta--networks 
is not closed.
In order to prove 
a theorem about the existence of minimizers for our problem,
we 
introduce
the class of \textit{``degenerate'' 
Theta--networks}.

\begin{defn} 
A  \textit{``degenerate'' 
Theta--network} is a network  composed by two regular curves $\gamma^1,\gamma^2$
of class $H^2$,
forming angles in pairs of $120$ and $60$
degrees
and by a curve $\gamma^3$ of length zero.
\end{defn}

Then one extends the functional $F$ to a new functional $\overline{F}$
defined 
on all 
$3$--networks $\Gamma$ of class $H^2$ 
as follows
$$
\overline{F}(\Gamma)=
\begin{cases}
\sum_{i=1}^3
\int_{\gamma^i}\big((k^i)^2+1\big)\,{\rm{d}}s^i \qquad \text{if}\; \Gamma
\;\text{is Theta--network, }\\
\sum_{i=1}^2
\int_{\gamma^i}\big((k^i)^2+1\big)\,{\rm{d}}s^i \qquad \text{if}\; \Gamma
\;\text{is a ``degenerate'' 
Theta--network, }\\
+\infty \qquad\qquad\quad\,\;\;\,\quad\qquad\quad\,\,\, \text{otherwise}\,.
\end{cases}
$$
In~\cite[Theorem 4.7 and Corollary 4.10]{danovplu} 
we prove  existence of minimizer for $\overline{F}$
and we show that $\overline{F}$ is the relaxation of $F$.

\begin{rem}
Due to the analysis done in~\cite{danovplu} we expect that also in Problem~\ref{Pgeneral} 
the class of regular  $N$--network is not closed. 
In that case the appropriate notion of ``degenerate'' network has still to be understood. 
We expect these to be networks with $N-k$ curves for some value(s) $k\in\{1,\ldots,N-1\}$, 
but furthermore one needs an understanding of which types
of networks arise as limits of minimizing sequences. For instance saying
for how many curves the length can go to zero and, in this case, 
which angles are present at the junctions in the limit. 
This kind of questions will be investigated in a forthcoming paper.
\end{rem}

\begin{rem}\label{altriangoli}
Coming back to the case of 
$3$--networks of class $H^2$,
we have already mentioned that fixing the angles is a crucial point
to get a non trivial problem. 
Let us call $\tau^i$ the unit tangent vector to the $i$--th curve at the junctions,
$\alpha_1$ the angle between $\tau^1$ and $\tau^2$, $\alpha_2$ the angle between $\tau^2$ and $\tau^3$ and $\alpha_3$ the angle between $\tau^3$
and $\tau^1$.
For every triple of angles $(\alpha_1,\alpha_2,\alpha_3)$ such that $\alpha_1+\alpha_2+\alpha_3=2\pi$ and different from $(0,0,2\pi)$ (or $(0,2\pi,0), (2\pi,0,0)$) 
the compactness and lower semicontinuity results presented in~\cite[Section 4]{danovplu}
remain true. The choice done in the case of Theta--networks of
all equal angles 
is not really motivated variationally but it simplifies 
a bit the boundary terms in the computation of the Euler-Lagrange equations 
(see Section~\ref{secEL}).
\end{rem}

\begin{figure}[h]
\begin{center}
\begin{tikzpicture}[scale=1.7]
\draw[color=black,scale=0.05,domain=0.7: 2.4,
smooth,variable=\t,shift={(40,0)},rotate=0]plot({2.*sin(\t r)},
{2.*cos(\t r)}) ; 
\draw[color=black,scale=0.05,domain=-1.35: 0.35,
smooth,variable=\t,shift={(40,0)},rotate=0]plot({2.*sin(\t r)},
{2.*cos(\t r)}) ; 
\draw[color=black,scale=0.05,domain=2.9: 3.15,
smooth,variable=\t,shift={(40,0)},rotate=0]plot({2.*sin(\t r)},
{2.*cos(\t r)}) ; 
\draw[color=black,scale=0.05,domain=-3.15: -1.85,
smooth,variable=\t,shift={(40,0)},rotate=0]plot({2.*sin(\t r)},
{2.*cos(\t r)}) ; 
\path[font=\small]
(2,-0.7)[below] node{$\alpha_1=\alpha_2=\alpha_3=\frac{2\pi}{3}$}
(2.5,0)[left] node{$\alpha_3$}
(1.7,0.25)[right] node{$\alpha_1$}
(1.7,-0.25)[right] node{$\alpha_2$}
(2.65,-0.55)[left] node{$\tau^3$}
(2.65,0.55)[left] node{$\tau^1$}
(1.41,0)[left] node{$\tau^2$};
\draw[thick, scale=0.5, shift={(4,0)}]
(0,0)--(0.5,-0.86)
(0,0)--(0.5,0.86)
(0,0)--(-1,0);
 \draw[thick,  shift={(2.25,0.43)}, scale=0.5, rotate=-30]
(0,0)to[out= -45,in=135, looseness=1] (0.1,-0.1)
(0,0)to[out= -135,in=45, looseness=1] (-0.1,-0.1);
\draw[ thick,  shift={(2.25,-0.43)}, scale=0.5, rotate=-150]
(0,0)to[out= -45,in=135, looseness=1] (0.1,-0.1)
(0,0)to[out= -135,in=45, looseness=1] (-0.1,-0.1);
\draw[thick, shift={(1.5,0)}, scale=0.5, rotate=90]
(0,0)to[out= -45,in=135, looseness=1] (0.1,-0.1)
(0,0)to[out= -135,in=45, looseness=1] (-0.1,-0.1);
\end{tikzpicture}\qquad\qquad
\begin{tikzpicture}[scale=1.7]
\draw[color=black,scale=0.05,domain=1.1: 2.85,
smooth,variable=\t,shift={(40,0)},rotate=0]plot({2.*sin(\t r)},
{2.*cos(\t r)}) ; 
\draw[color=black,scale=0.05,domain=-1.35: 0.5,
smooth,variable=\t,shift={(40,0)},rotate=0]plot({2.*sin(\t r)},
{2.*cos(\t r)}) ; 
\draw[color=black,scale=0.05,domain=-2.85: -1.85,
smooth,variable=\t,shift={(40,0)},rotate=0]plot({2.*sin(\t r)},
{2.*cos(\t r)}) ; 
\path[font=\small]
(2,-0.7)[below] node{$\alpha_1=\alpha_3=\frac{3\pi}{4}\,,\alpha_2=\frac{\pi}{2}$}
(2.5,-0.1)[left] node{$\alpha_3$}
(1.78,0.25)[right] node{$\alpha_1$}
(1.55,-0.25)[right] node{$\alpha_2$}
(2.4,-0.55)[left] node{$\tau^3$}
(2.7,0.5)[left] node{$\tau^1$}
(1.41,0)[left] node{$\tau^2$};
\draw[thick, scale=0.5, shift={(4,0)}]
(0,0)--(0,-1)
(0,0)--(0.7,0.7)
(0,0)--(-1,0);
 \draw[thick,  shift={(2.36,0.36)}, scale=0.5, rotate=-45]
(0,0)to[out= -45,in=135, looseness=1] (0.1,-0.1)
(0,0)to[out= -135,in=45, looseness=1] (-0.1,-0.1);
\draw[ thick,  shift={(2,-0.5)}, scale=0.5, rotate=-180]
(0,0)to[out= -45,in=135, looseness=1] (0.1,-0.1)
(0,0)to[out= -135,in=45, looseness=1] (-0.1,-0.1);
\draw[thick, shift={(1.5,0)}, scale=0.5, rotate=90]
(0,0)to[out= -45,in=135, looseness=1] (0.1,-0.1)
(0,0)to[out= -135,in=45, looseness=1] (-0.1,-0.1);
\end{tikzpicture}\qquad\qquad
\begin{tikzpicture}[scale=1.7]
\draw[color=black,scale=0.05,domain=-1.35: 3.15,
smooth,variable=\t,shift={(40,0)},rotate=0]plot({2.*sin(\t r)},
{2.*cos(\t r)}) ; 
\draw[color=black,scale=0.05,domain=-3.15: -1.85,
smooth,variable=\t,shift={(40,0)},rotate=0]plot({2.*sin(\t r)},
{2.*cos(\t r)}) ; 
\path[font=\small]
(1.3,-0.7)[below] node{$\alpha_1=\alpha_2=0\,,\alpha_3=2\pi$}
(2.5,0)[left] node{$\alpha_3$}
(1.41,0)[left] node{$\tau^1\equiv\tau^2\equiv\tau^3$};
\draw[thick, scale=0.5, shift={(4,0)}]
(0,0)--(-1,0);
\draw[thick, shift={(1.5,0)}, scale=0.5, rotate=90]
(0,0)to[out= -45,in=135, looseness=1] (0.1,-0.1)
(0,0)to[out= -135,in=45, looseness=1] (-0.1,-0.1);
\end{tikzpicture}
\end{center}
\caption{Three examples of triples of angles: the angles of a Theta--network,
an admissible generalized triple and a not admissible one.
}
\label{diffangoli}
\end{figure}
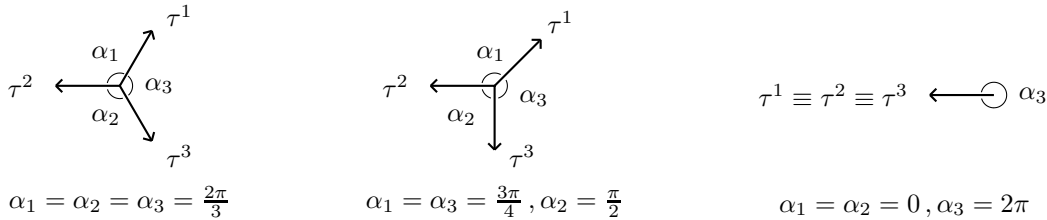

Apart from proving an existence theorem, 
our main aim in~\cite{danovplu} is to characterize the minimizers.
In particular we 
show that the minimizer for the relaxed functional is in fact a Theta--network. 
We give here the main ideas.
\begin{enumerate}
\item Firstly we prove that the energy of any ``degenerate''  Theta--network
is greater or equal to the energy 
of the  ``Figure Eight" $\mathcal{F}$ (see Figure~\ref{otto}). 
We cannot simply say that the ``Figure Eight" is the optimal ``degenerate''  
Theta--network since it has the ``wrong'' angles. 
A  ``degenerate''   Theta--network 
has angles of 60 and 120 degrees at the junctions whereas 
in~\cite{explicitelasticae} it is computed that the ``Figure Eight" has angles of $\approx 80$ and $\approx 100$ degrees at the junctions.
\item The second step of the proof is based on the computations given in full details here in Section~\ref{bulgari} 
to get an approximate value for the energy of the ``Figure Eight". We get that $F(\mathcal{F})\approx 21.2075$.
\item Then to conclude it is enough to exhibit a Theta--network
with strictly less energy than $\mathcal{F}$. This is achieved by computing the energy 
of the standard double
bubble which is 
$$
\frac{2}{3}\sqrt{8\pi(8\pi+3\sqrt{3})}\approx 18.4059\,.
$$
\end{enumerate}

{\it{Generalization to different angles}}

Writing this paper we have noticed that the proof that we have just discussed 
can be indeed applied, 
as we are going to explain, 
 to more general networks
than to Theta--networks. Consider indeed a network that satisfies the generalized condition on the
 angles presented in Remark \ref{altriangoli}. Then also in this case, with arguments
 based on Gauss-Bonnet, along a minimizing sequence
  the length of at most one of the three curves can go to zero. 
We introduce the class  of  \emph{double drops}~\cite[Definition 3.6]{danovplu}: 
networks composed 
by two drops (defined as in~\ref{defdrop}) that meet at a four point $P$
forming \textit{any possible angle} at $P$.
Then one notice that ``degenerate''  Theta--networks
can be 
seen as the union of two drops
$\gamma$ and $\tilde{\gamma}$  that meet at
a four--point $\gamma^1(0)=\gamma^1(1)=\gamma^2(0)=\gamma^2(1)=:P$
forming angles in pairs of $120$ and $60$ degrees at $P$.
Among all double drops a minimizer is the
``Figure Eight" and 
hence a fortiori all ``degenerate''  Theta--networks have energy bigger 
or equal than the energy of the  ``Figure Eight".
Changing the triple of angles in the definition of Theta--network (see Remark~\ref{altriangoli}), 
the class of ``degenerate''  Theta--networks will change too, but it
remains true that the energy of the ``Figure Eight"
is less or equal to the energy of any new ``degenerate''  network.

\medskip

This part of the proof is not sharp in the sense that the energy of 
any  ``degenerate''  network with angles of $120$ and $60$
degrees in pair (or $\alpha_1$ and $\alpha_2$
different from the ones of the ``Figure Eight" 
 in the general case)
could be much higher than the energy of the
``Figure Eight".

\medskip

The last step of the proof depends on a quantitative inequality:
the energy of 
the standard double bubble, easy to compute explicitly,
 is less than the energy of 
  the ``Figure Eight".
We show here that for any triple of angles $(\alpha_1,\alpha_2,\alpha_3)$ 
with all angles different from zero and with two angles less or equal then 135 degrees, 
the standard double bubble can be replaced by a network $\mathcal{N}$ composed
by a segment $\mathcal{S}$ and two arcs of circle
$\mathcal{A}_1,\mathcal{A}_2$
 meeting with angles $\alpha_1,\alpha_2,\alpha_3$
and the energy of this network is still less than the energy of 
 the ``Figure Eight". In particular, also the last step of the proof works in this more general case.

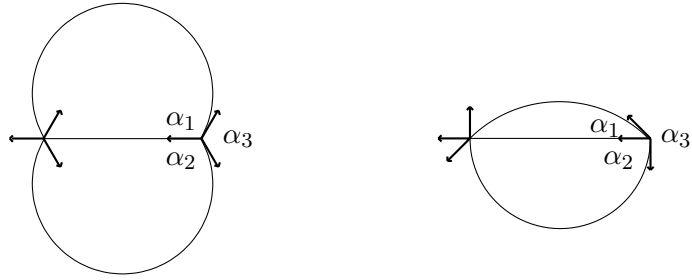
\begin{figure}[h]
\begin{center}
\begin{tikzpicture}[scale=1.2]
\draw[color=black,scale=1,domain=-2.09: 2.09,
smooth,variable=\t,shift={(0,0)},rotate=0]plot({1.*sin(\t r)},
{1.*cos(\t r)}) ; 
\draw[color=black,scale=1,domain=-2.09: 2.09,
smooth,variable=\t,shift={(0,-1)},rotate=180]plot({1.*sin(\t r)},
{1.*cos(\t r)}) ; 
\draw
(-0.87,-0.5)--(0.87,-0.5);
\path[shift={(0,0)}] 
 (0.65,-0.95)[above] node{$\alpha_2$}
 (0.65,-0.1)[below] node{$\alpha_1$}
 (1,-0.5)[right] node{$\alpha_3$};
  \draw[thick, scale=0.35, shift={(-2.5,-1.425)}]
(0,0)--(0.5,-0.86)
(0,0)--(0.5,0.86)
(0,0)--(-1,0);
 \draw[thick, scale=0.35, shift={(2.5,-1.425)}]
(0,0)--(0.5,-0.86)
(0,0)--(0.5,0.86)
(0,0)--(-1,0);
 \draw[thick,  shift={(1.065,-0.19)}, scale=0.35, rotate=-30]
(0,0)to[out= -45,in=135, looseness=1] (0.1,-0.1)
(0,0)to[out= -135,in=45, looseness=1] (-0.1,-0.1);
\draw[ thick,  shift={(1.065,-0.81)}, scale=0.35, rotate=-150]
(0,0)to[out= -45,in=135, looseness=1] (0.1,-0.1)
(0,0)to[out= -135,in=45, looseness=1] (-0.1,-0.1);
 \draw[thick,  shift={(-0.685,-0.19)}, scale=0.35, rotate=-30]
(0,0)to[out= -45,in=135, looseness=1] (0.1,-0.1)
(0,0)to[out= -135,in=45, looseness=1] (-0.1,-0.1);
\draw[ thick,  shift={(-0.685,-0.81)}, scale=0.35, rotate=-150]
(0,0)to[out= -45,in=135, looseness=1] (0.1,-0.1)
(0,0)to[out= -135,in=45, looseness=1] (-0.1,-0.1);
\draw[thick, shift={(0.5,-0.5)}, scale=0.35, rotate=90]
(0,0)to[out= -45,in=135, looseness=1] (0.1,-0.1)
(0,0)to[out= -135,in=45, looseness=1] (-0.1,-0.1);
\draw[thick, shift={(-1.25,-0.5)}, scale=0.35, rotate=90]
(0,0)to[out= -45,in=135, looseness=1] (0.1,-0.1)
(0,0)to[out= -135,in=45, looseness=1] (-0.1,-0.1);
\end{tikzpicture}\qquad\qquad\qquad
\begin{tikzpicture}[scale=1.2]
\draw[color=black,scale=1,domain=-1.57: 1.57,
smooth,variable=\t,shift={(-1,0)},rotate=180]plot({1.*sin(\t r)},
{1.*cos(\t r)}) ; 
\draw
(-2,0)--(0,0);
\draw[thick,shift={(-2,0)}, scale=0.35]
(0,0)--(0,1)
(0,0)--(-0.7,-0.7)
(0,0)--(-1,0);
\draw[thick, scale=0.35]
(0,0)--(0,-1)
(0,0)--(-0.7,0.7)
(0,0)--(-1,0);
 \draw[thick,  shift={(-0.25,0.25)}, scale=0.35, rotate=45]
(0,0)to[out= -45,in=135, looseness=1] (0.1,-0.1)
(0,0)to[out= -135,in=45, looseness=1] (-0.1,-0.1);
\draw[ thick,  shift={(0,-0.35)}, scale=0.35, rotate=-180]
(0,0)to[out= -45,in=135, looseness=1] (0.1,-0.1)
(0,0)to[out= -135,in=45, looseness=1] (-0.1,-0.1);
\draw[thick, shift={(-0.35,0)}, scale=0.35, rotate=90]
(0,0)to[out= -45,in=135, looseness=1] (0.1,-0.1)
(0,0)to[out= -135,in=45, looseness=1] (-0.1,-0.1);
\draw[color=black,scale=1.405,domain=-0.78: 0.78,
smooth,variable=\t,shift={(-0.72,-0.71)},rotate=0]plot({1.*sin(\t r)},
{1.*cos(\t r)}) ; 
 \draw[thick,  shift={(-2.25,-0.25)}, scale=0.35, rotate=135]
(0,0)to[out= -45,in=135, looseness=1] (0.1,-0.1)
(0,0)to[out= -135,in=45, looseness=1] (-0.1,-0.1);
\draw[ thick,  shift={(-2,0.35)}, scale=0.35, rotate=0]
(0,0)to[out= -45,in=135, looseness=1] (0.1,-0.1)
(0,0)to[out= -135,in=45, looseness=1] (-0.1,-0.1);
\draw[thick, shift={(-2.35,0)}, scale=0.35, rotate=90]
(0,0)to[out= -45,in=135, looseness=1] (0.1,-0.1)
(0,0)to[out= -135,in=45, looseness=1] (-0.1,-0.1);
\draw[white]
(-1,-1.5)--(0,-1.5);
\path[shift={(-1,0.5)}] 
 (0.65,-0.95)[above] node{$\alpha_2$}
 (0.5,-0.18)[below] node{$\alpha_1$}
 (1,-0.5)[right] node{$\alpha_3$};
\end{tikzpicture}
\end{center}
\caption{On the left the standard double bubble and on the right the double bubble with angles $(\alpha_1,\alpha_2,\alpha_3)$.}\label{bubble}
\end{figure}

Suppose w.l.o.g. that $0<\alpha_1\leq\alpha_2\leq\alpha_3$. 
Consider 
a segment $\mathcal{S}$ of length $2 \sin(\alpha_1)R$ with $R>0$
and let $\mathcal{A}_1$ be the circular arc that forms an angle $\alpha_1$ 
with the segment $\mathcal{S}$ at both end points. 
Let $\mathcal{A}_2$ be the circular arc such that between 
$\mathcal{S}$ and $\mathcal{A}_2$ there is an angle equal to $\alpha_2$ at both end-points. 
Then $\mathcal{N}=\{\mathcal{A}_1,\mathcal{A}_2,\mathcal{S}\}$ is a regular 
$3$--network of class $H^2$ with 
angles $(\alpha_1,\alpha_2,\alpha_3)$.
It is easy to see that 
$$
\begin{array}{lll}
 L(\mathcal{A}_1)=2\alpha_1 R\,,\quad&
L(\mathcal{A}_2)=2\alpha_2\frac{\sin\alpha_1}{\sin\alpha_2} R\,,\quad&
L(\mathcal{S})=2\sin\alpha_1 R\,,\\
E(\mathcal{A}_1)=\frac{2\alpha_1 }{R}\,,&
E(\mathcal{A}_2)=\frac{2\alpha_2\sin\alpha_2}{\sin\alpha_1R}\,,&
E(\mathcal{S})=0\,.\\
\end{array}
$$
and that
$F(\mathcal{N})=\frac{2}{R}\left(\alpha_1+\frac{\alpha_2\sin\alpha_2}{\sin\alpha_1}
\right)+
2R\left(\alpha_1+ \alpha_2\frac{\sin\alpha_1}{\sin\alpha_2}+\sin\alpha_1\right)$.
Since  $R$ is a free parameter, we optimize now over $R$:
by a direct computation, 
the energy of the optimal rescaling $\mathcal{N}_\mathrm{opt}$  of $\mathcal{N}$ is
$$
F(\mathcal{N}_\mathrm{opt})
=4\sqrt{\alpha_1+\frac{\alpha_2\sin\alpha_2}{\sin\alpha_1}}
\sqrt{\alpha_1+ \alpha_2\frac{\sin\alpha_1}{\sin\alpha_2}+\sin\alpha_1}\,.
$$
If $\alpha_1\leq\alpha_2\leq \frac{3\pi}{4}$, then $F(\mathcal{N}_\mathrm{opt})<21.2075$,
hence we can conclude again that the minimizers are not ``degenerate''.

\medskip

Notice that  the condition on the angles gives rise to an easy generalization of the 
last step of the proof, but 
we do not expect that the condition  is optimal.

\medskip

In~\cite{danovplu} we have also proved that the minimizers are injective, 
in the sense that each curve of a minimal network is injective. 
It remains to prove that they are embedded: also even though we know that each curve in the minimizing network
has no self--intersections, a curve can  still intersect the others.
We underline that in the case of minimization of the elastic energy for networks many arguments that work when minimizing the length, fail here or are not applicable. 
For example the classical convexification argument works only partially
because of the presence of the angles at the junctions. Symmetrization arguments are difficult 
due to lack of comparison principles and at each step of the proof
you have to take care of the fact
 that each curve in the network has to be in $H^2$ 
and hence when replacing parts of the curves continuity of the tangent has to be ensured.

\subsection{Open networks}
In the Definition~\ref{network} of $N$--networks 
we require that both end points of each curve meet
with other curves  in junctions. Similar to the case of open curves, one could define \textit{``open" networks} requiring that some of the end points 
of the curves do not meet in junction, but are fixed. 

\subsection{Elastic clusters}

Consider a sufficiently smooth embedded regular $N$--network. Due to the embeddedness, 
each curve of the network is injective and does intersect the other curves only at the end points.
Such networks define a partition of the plane in finitely many bounded sets
$E_1,  \ldots, E_n$ and an unbounded one
$E_0:=\mathbb{R}^2\setminus\cup_{i=1}^n E_i$.
We can ask 
the sets $E_i$ to be open, connected and piecewise regular.
We call the sets $E_i$ 
chambers or bubbles and we denote their area by $m_i$, i.e. $\vert E_i\vert=m_i$
with $i\in\{1,\ldots, n\}$.
The family
$\mathcal{E}=(E_1, \ldots, E_n)$
is usually called $n$--cluster. 
An interesting problem would be to 
minimize the functional $F$, defined in~\eqref{funzionalenetworks},
between all networks that 
give an $n$--cluster
with the area $m_i$
of each chamber fixed.


\subsection{Extensions to $\mathbb{R}^n$}

In Definition~\ref{network}
we consider $N$--network composed by planar curves. It is also possible to
change the ambient space from $\mathbb{R}^2$ to  $\mathbb{R}^n$
taking curves $\gamma^i:[0,1]\to\mathbb{R}^n$
and rephrase the previous problems in this framework.
In this direction 
in the next section the computation of  the Euler Lagrange equations for 
Problem~\ref{problemdrops} and Problem~\ref{Pdoppiabolla}
are directly applicable in $\mathbb{R}^n$.

\section{Euler-Lagrange equations}\label{secEL}

In this section 
we derive  the Euler--Lagrange equations
for the energy functional $F_{\delta}$, $\delta>0$, given in \eqref{funzionalenetworks},
for the curves $\gamma^i$ of the $N$--networks in terms of the curvature vector.
The equations can be directly generalized to 
 $\mathbb{R}^n$. 
 
We define the operator $\partial_s^{\perp}$ on vector fields as the normal component of $\partial_s$. That is, 
$$ \partial_s^{\perp} \phi := \partial_s \phi- \langle \partial_s \phi, \partial_s \gamma\rangle \partial_s \gamma \, . $$
In the literature this operator is often denoted by $\nabla_s$.

\begin{prop}[Euler-Lagrange equation]
Fixed $\delta>0$, if a regular $N$--network $\Gamma$  
is a stationary point for $F_\delta$, then
each curve $\gamma^i$ of $\Gamma$ is of class $C^\infty$ and satisfies
\begin{equation}\label{eulervector}
2 (\partial^\perp_s)^2 \boldsymbol{\kappa}^{i}+ |\boldsymbol{\kappa}^i|^2 \boldsymbol{\kappa}^{i} -\delta \boldsymbol{\kappa}^{i}=0\, \mbox{ on }(0,1),
\end{equation}
or in terms of the scalar curvature 
\begin{equation}\label{eulero}
2 \partial^2_{s} k^{i}+\left(k^{i}\right)^{3} -\delta k^{i}=0 \mbox{ on }(0,1)\,.
\end{equation}
Moreover if we consider the problem defined only for networks with triple junctions
and we fix all equal angles 
at the triple junctions  the following conditions are fulfilled:
\begin{equation*}
\sum_{i=1}^3k^i=0\qquad\text{and}\qquad \sum_{i=1}^3 2 \partial_{s}k^i\ \nu^i+(k^i)^2\tau^i =0\,,
\end{equation*}
at the junctions. 
\end{prop}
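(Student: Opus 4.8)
The plan is to compute the first variation of $F_\delta$ along all admissible perturbations of $\Gamma$ and to read off both the interior equation and the junction conditions from the resulting expression. So I would let $\gamma^i_t$, $t\in(-\varepsilon,\varepsilon)$, be smooth variations of the curves of $\Gamma=\Gamma_0$, put $\phi^i:=\partial_t\gamma^i_t|_{t=0}$, and use the elementary identities $\partial_t(\mathrm{d}s^i)=\langle\partial_s\phi^i,\tau^i\rangle\,\mathrm{d}s^i$ and $\partial_t\tau^i=\partial^\perp_s\phi^i$ together with the Frenet relations \eqref{dercurvatura}. Differentiating $\int_{\gamma^i}(k^i)^2\,\mathrm{d}s^i$, commuting $\partial_t$ with $\partial_s$, and integrating by parts twice (and doing the analogous, easier computation for $\int_{\gamma^i}1\,\mathrm{d}s^i$), one is led to
\begin{equation}\label{primavar}
\frac{\mathrm{d}}{\mathrm{d}t}\Big|_{t=0}F_\delta(\Gamma_t)=\sum_{i=1}^N\int_{\gamma^i}\big\langle 2(\partial^\perp_s)^2\boldsymbol{\kappa}^i+|\boldsymbol{\kappa}^i|^2\boldsymbol{\kappa}^i-\delta\boldsymbol{\kappa}^i,\ \phi^i\big\rangle\,\mathrm{d}s^i+\mathcal{B}(\Gamma;\phi),
\end{equation}
where $\mathcal{B}$ is a sum over the end-points of the curves whose contribution at each end-point is $\pm\big\langle 2\partial_s k^i\,\nu^i+\big((k^i)^2+\delta\big)\tau^i,\ \phi^i\big\rangle\ \mp\ 2k^i\big\langle\nu^i,\ \partial^\perp_s\phi^i\big\rangle$, the sign being $+$ at $\gamma^i(1)$ and $-$ at $\gamma^i(0)$.

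To get the interior equation I would then test \eqref{primavar} with $\phi^i\in C^\infty_c((0,1))$ and $\phi^j\equiv0$ for $j\neq i$, so that $\mathcal{B}=0$; the fundamental lemma of the calculus of variations gives $2(\partial^\perp_s)^2\boldsymbol{\kappa}^i+|\boldsymbol{\kappa}^i|^2\boldsymbol{\kappa}^i-\delta\boldsymbol{\kappa}^i=0$ on $(0,1)$, which is \eqref{eulervector}, and since $(\partial^\perp_s)^2\boldsymbol{\kappa}^i=\partial^2_s k^i\,\nu^i$ and $|\boldsymbol{\kappa}^i|^2=(k^i)^2$, projecting onto $\nu^i$ gives \eqref{eulero}. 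For smoothness I would reparametrise each $\gamma^i$ proportionally to arclength---legitimate since $F_\delta$ is reparametrisation invariant---so that \eqref{eulero} reads, weakly, $2\partial^2_s k^i=\delta k^i-(k^i)^3$ with a priori only $k^i\in L^2$; the one-dimensional embedding $W^{2,2/3}(0,1)\hookrightarrow C^0$ upgrades this in one step to $k^i\in L^\infty$, after which a routine bootstrap yields $k^i\in C^\infty$, and reconstructing $\gamma^i$ from $k^i$ through the Frenet system gives $\gamma^i\in C^\infty$.

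Finally, for the junction conditions I would fix a triple junction $P$ and use variations supported near $P$. Admissibility forces: (i) the three curves still meet at one point, i.e.\ $\phi^1(P)=\phi^2(P)=\phi^3(P)=:V$ with $V\in\mathbb{R}^2$ free; and (ii) the pairwise angles at $P$ are frozen, which---because $\partial_t\tau^i=\partial^\perp_s\phi^i$ is normal---means $\partial^\perp_s\phi^i(P)=\omega\,\nu^i(P)$ with a common but arbitrary $\omega\in\mathbb{R}$. For such $\phi$ the bulk integral in \eqref{primavar} vanishes by the interior equation, so stationarity reduces to $\mathcal{B}=0$, a linear identity in $(V,\omega)$. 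Its $\omega$-part is $2\omega\sum_{i=1}^3\langle k^i\nu^i,\nu^i\rangle=2\omega\sum_{i=1}^3 k^i$, forcing $\sum_{i=1}^3 k^i=0$; its $V$-part is $\big\langle\sum_{i=1}^3\big(2\partial_s k^i\,\nu^i+((k^i)^2+\delta)\tau^i\big),\,V\big\rangle$, and here the equal-angle hypothesis enters decisively: three unit vectors at mutual $120^\circ$ satisfy $\sum_{i=1}^3\tau^i=0$, so the $\delta$-contribution cancels and we are left with $\sum_{i=1}^3\big(2\partial_s k^i\,\nu^i+(k^i)^2\tau^i\big)=0$. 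The same argument applies at the other junction.

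I expect the main obstacles to be two. First, the careful bookkeeping of $\mathcal{B}$ and the identification of the tangent space of the admissible class at a junction: one must verify that the only genuine freedom there is the pair $(V,\omega)$---the tangential ``reparametrisation'' directions contribute nothing because $F_\delta$ is geometric---and that the equal-angle condition is exactly what makes the $\delta$-terms drop out. Second, the initial step of the regularity bootstrap, since a priori one only controls $\boldsymbol{\kappa}^i$ in $L^2$ while the cubic term $|\boldsymbol{\kappa}^i|^2\boldsymbol{\kappa}^i$ sits merely in $L^{2/3}$, so one really has to use a sub-critical Sobolev embedding before the estimates become standard.
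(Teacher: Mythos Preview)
Your derivation of the first variation, the interior equation, and the junction conditions follows the same route as the paper: compute the variation, test with compactly supported $\phi^i$ to get \eqref{eulervector}--\eqref{eulero}, and then at a triple junction with equal angles identify the admissible variations as those with a common value $V$ and a common normal-derivative coefficient $\omega$, read off $\sum_i k^i=0$ from the $\omega$-part and $\sum_i(2\partial_s k^i\,\nu^i+(k^i)^2\tau^i)=0$ from the $V$-part after using $\sum_i\tau^i=0$. (There is a sign slip in your expression for $\mathcal{B}$ --- the correct coefficient of $\psi^i$ at an endpoint is $-2\partial_s k^i\,\nu^i-(k^i)^2\tau^i+\delta\tau^i$, not $2\partial_s k^i\,\nu^i+((k^i)^2+\delta)\tau^i$ --- but this is harmless for the conclusions since one sets the sum to zero.)

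The genuine gap is the regularity step. At the $H^2$ level the only weak equation you have is the one \emph{before} any integration by parts, namely
\[
\int_0^1 2\langle\boldsymbol{\kappa}^i,\partial_s^2\psi^i\rangle\,\mathrm{d}s^i+\int_0^1(-3|\boldsymbol{\kappa}^i|^2+\delta)\langle\tau^i,\partial_s\psi^i\rangle\,\mathrm{d}s^i=0,
\]
with the second derivatives on the test function; you cannot pass to the scalar relation $2\partial_s^2 k^i=\delta k^i-(k^i)^3$ without already knowing more regularity. Your proposed fix via ``$W^{2,2/3}\hookrightarrow C^0$'' does not work: Sobolev spaces with exponent $p<1$ are not Banach spaces and the standard embedding theorems do not apply, and in fact for $k\in L^2(0,1)$ the cube $k^3$ need not even be locally integrable (e.g.\ $k(s)=s^{-1/3}$), so ``$\partial_s^2 k=\tfrac12(\delta k-k^3)$'' is not a distributional identity to begin with.

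The paper avoids this altogether by working directly with the vector-valued weak form above and choosing, componentwise, test functions of the type
\[
\psi^j_l(x)=\int_0^x|\partial_x\gamma^j(y)|\Big(\int_0^y\eta_l(t)\,dt\Big)\,dy-x^2(\alpha_l x+\beta_l),\qquad \eta_l\in C^\infty_0(0,1),
\]
with $\alpha_l,\beta_l$ chosen so that $\psi^j\in H^2_0$; then $\partial_s^2\psi^j_l=\eta_l$ up to a smooth correction and all lower-order terms are controlled by $\|\eta_l\|_{L^1}$. Duality immediately gives $\boldsymbol{\kappa}^j\in L^\infty$; a second, analogous choice of test functions yields $\boldsymbol{\kappa}^j\in W^{1,\infty}$, and only then can one integrate by parts once to obtain a genuine first-order ODE for $\partial_s\boldsymbol{\kappa}^j$, from which the bootstrap to $C^\infty$ is routine. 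This explicit test-function construction is the missing ingredient in your argument.
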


\begin{proof}
\textit{First variation} 

Let $\Gamma$ be a regular connected $N$--network of class $H^2$. 
Since the number of junctions is finite
it is sufficient to consider the variation of $M$ curves meeting at a junction, $2\leq M\leq N$. 
Let $O$ be the junction
around which we variate and, without loss generality, let  
$\gamma^i:[0,1]\to\mathbb{R}^2$, $i\in\{1,...,M\}$, 
be the regular curves meeting at $O$. 
Then, $\gamma^{i}(0)=O$ for  $i\in\{1,...,M\}$ and possibly also $\gamma^j(1)=O$ 
for some values of the parameter $j \in \{1, ..,M\}$. Indeed, in a general (connected) network
some of the curves can start end ends at the same junction point.

We want to take $\widetilde{\Gamma}$
a variation of $\Gamma$.
To this aim we 
take $t\in\mathbb{R}$ and 
curves $\psi^{i}:[0,1]\to\mathbb{R}^2$, $i\in\{1,...,M\}$, 
in $C^\infty([0,1])$
such that $\psi^{i}(0)=\psi^{j}(0)$ for $i,j\in\{1,...,M\}$. 
Moreover, $\psi^i \in C^{\infty}_0([0,1))$ if $\psi^i(1)\ne O$,
otherwise  
we ask that $\psi^i(0)=\psi^i(1)$. 
Let $\tilde{\Gamma}$ be the network consisting of the curves 
$\tilde{\gamma}_i=\gamma_i + t \psi_i$, $i\in\{1,...,M\}$, 
and (if $M<N$) the curves $\tilde{\gamma}_i=\gamma_i$ for $i\in\{M+1, ..., N\}$. 
For $|t|$ small enough, we have that $\widetilde{\Gamma}$ is still a regular network 
with the property that $O$ is a junction point for the curves $\tilde{\gamma}^i$ with $i\in\{1,..,M\}$, 
and at the other junctions $\tilde{\Gamma}$ is equal to $\Gamma$. 

We compute now the first variation 
$\frac{d}{dt}F_{\delta}(\widetilde{\Gamma})_{|t=0}$. 
From now on we omit the dependence on the variable $x$ to simplify the notation.
Using the definition of curvature vector~\eqref{curvvector} and
making use of the fact that $\boldsymbol{\kappa}^i$ is normal to $\gamma^i$, we have
\begin{align}\nonumber
 \frac{d}{dt}F_{\delta}(\widetilde{\Gamma})_{|t=0}
 & =\sum_{i=1}^{M}\int_{0}^{1} 2 \Big\langle \boldsymbol{\kappa}^i,
 \frac{\partial^2_{x}\psi^{i}}{\left|\partial_x\gamma^{i}\right|^{2}}
 -\frac{\partial_x\psi^{i}}{\left|\partial_x\gamma^{i}\right|^{4}}\langle \partial_x\gamma^{i}, 
 \partial^2_{x}\gamma^{i}\rangle \Big\rangle\left|\partial_x\gamma^{i}\right| \mathrm{d}x \\ \nonumber
& \quad + \sum_{i=1}^{M} \int_{0}^{1} (-3|\boldsymbol{\kappa}^i|^2+\delta)
 \frac{\left\langle \partial_x\psi^{i},\partial_x\gamma^{i}\right\rangle }{\left|\partial_x\gamma^{i}\right|}\,\mathrm{d}x \\ \label{primaver}
 & =\sum_{i=1}^{M} \left[\int_{0}^{1} 2 \langle \boldsymbol{\kappa}^{i},
 \partial^2_{s} \psi^{i} \rangle \, \mathrm{d}s   + \int_{0}^{1} (-3 |\boldsymbol{\kappa}^i|^2+\delta)
\left\langle \tau^{i},\partial_s\psi^{i}\right\rangle \,\mathrm{d}s^i \right]\,.
\end{align}

\textit{Euler-Lagrange equations}\\
Let us assume now that the curves $\gamma^i$ are in $H^4$. 
Then, integrating by parts
the term where the test functions $\psi^{i}$ 
appear derived twice we get
\begin{align*}
 \frac{d}{dt}F_{\delta}(\widetilde{\Gamma})_{|t=0}
 & =\sum_{i=1}^{M} 2 \left. \langle \boldsymbol{\kappa}^{i},
 \partial_s \psi^{i}\rangle \right|_0^1 + \sum_{i=1}^{M} \left[\int_{0}^{1}  \left\langle 
 -2\partial_s \boldsymbol{\kappa}^{i}+ (-3 |\boldsymbol{\kappa}^i|^2+\delta)\tau^{i},
\partial_s\psi^{i}\right\rangle \,\mathrm{d}s \right]\,.
\end{align*}
Integrating by parts once more and using the fact that 
$\partial_s \boldsymbol{\kappa}=\partial^\perp_s \boldsymbol{\kappa}
- |\boldsymbol{\kappa}|^2\tau$, we get
\begin{align}\label{secondintegration}
 &\frac{d}{dt}F_{\delta}(\widetilde{\Gamma})_{|t=0}
 =\sum_{i=1}^{M} \left[ 2 \left. \langle \boldsymbol{\kappa}^{i},
 \partial_s \psi^{i}\rangle \right|_0^1 
+  \left. \langle -2\partial^\perp_s \boldsymbol{\kappa}^{i}
- |\boldsymbol{\kappa}^i|^2\tau^i+\delta\tau^i, \psi^{i}\rangle \right|_0^1 
 \right] \nonumber \\
& \quad 
+ \sum_{i=1}^{M} 
 \int_{0}^{1}  \left\langle
 2 \partial_s^2 \boldsymbol{\kappa}^{i}
 +  (3 |\boldsymbol{\kappa}^i|^2-\delta)\boldsymbol{\kappa}^{i}
 +6 \langle \boldsymbol{\kappa}^i, \partial_s \boldsymbol{\kappa}^i\rangle \tau^i,
\psi^{i} \right\rangle \,\mathrm{d}s^i \,.
\end{align}
Considering test functions with compact support and variating each curve separately
and using that
\begin{align*}
\partial_s^2 \boldsymbol{\kappa}^{i}= (\partial_s^\perp)^2 \boldsymbol{\kappa}^{i} - 3 \langle \partial_s \boldsymbol{\kappa}^i, \boldsymbol{\kappa}^i\rangle \partial_s \gamma^i - |\boldsymbol{\kappa}^i|^2 \boldsymbol{\kappa}^i\\
 (\partial^\perp_s)^2 \boldsymbol{\kappa}^{i} = \partial^\perp_s (\partial_ s k^{i} \;  \nu^{i}) 
 = \partial_ s^2 k^{i} \;  \nu^{i} \,,
\end{align*}
the  Euler-Lagrange equations~\eqref{eulervector} and~\eqref{eulero}
follow, respectively. 
We remind that this formula is well known in the literature, 
see for instance~\cite{AnnaPaola1}.

 \textit{Regularity}
 
Let us now show that a critical point in $H^2$ is indeed in $H^4$ and even $C^{\infty}$. In particular, we justify here the extra assumption required to derive the Euler-Lagrange equation. We are going to use the weak formulation of the Euler-Lagrange equation given in~\eqref{primaver}, taking a variation only of one curve at a time and choosing appropriate test-functions as in \cite{DDG}. 

Let us choose an index $j \in \{1, ..,M\}$, we prove the regularity of $\gamma^j$. 
We start by showing that $\boldsymbol{\kappa}^{j} \in L^{\infty}$. 

In~\eqref{primaver} we consider test functions $\psi^i\equiv 0$ for $i\ne j$ 
and $\psi^{j}:[0,1] \to \mathbb{R}^2$ defined as follows:
given $\eta_1, \eta_2 \in C^{\infty}_{0}(0,1)$ 
the components $\psi^j_1,\psi^j_2$ of $\psi^j$ are given by
$$\psi^j_l(x)= \int_0^x |\partial_x \gamma^j(y)| \left( \int_0^y \eta_l (t) \, dt \right) \, d y -x^2 (\alpha_l x +\beta_l) \, , 
$$
with
\begin{align*}
\alpha_l & = - 2\int_0^1 |\partial_x \gamma^j(y)| \left( \int_0^y \eta_l (t) \, dt \right) \, d y  +  |\partial_x \gamma^j (1)| \int_0^1 \eta_l (y) \, dy\,,  \\
\beta_l & = 3 \int_0^1 |\partial_x \gamma^j(y)| \left( \int_0^y \eta_l (t) \, dt \right) \, d y -|\partial_x \gamma^j (1)| \int_0^1 \eta_l (t) \, dt \,.
\end{align*}
for $l=1,2$. With this choice of the constants, $\psi^j \in H^2_0((0,1);\mathbb{R}^2)$. 
By the formulas above
$$ |\alpha_l|, |\beta_l| , \| \partial_s \psi^j_l\|_{L^{\infty}} \leq C \|\eta_l\|_{L^1} \,  ,$$
for some constant $C>0$ depending only on the $\delta>0$ such that
$$ 0<\delta \leq |\partial_x \gamma^j(x)|\leq \delta^{-1}<\infty \mbox{ in }[0,1]\, .$$
Here we are using the fact that the curve $\gamma^{j}$ is regular. 
Taking this special variation and choosing first $\eta_2 \equiv0$ and then $\eta_1 \equiv 0$,
it follows from~\eqref{primaver}
 that for $l=1,2$
$$ \Big|\int_{0}^{1} 2 \boldsymbol{\kappa}^{j}_l
 \eta_l \,\mathrm{d}x  \Big| \leq  C \|\eta_l\|_{L^1} \, , $$
and hence by duality that $\boldsymbol{\kappa}^{j} \in L^{\infty}$. The constant $C$ depends on $\delta$ and the elastic energy of $\gamma^j$.

We prove now that the curvature vector $\boldsymbol{\kappa}^{j}$ is weakly differentiable.
Similarly as above we variate only at the curve $\gamma^{j}$. 
Given $\eta_1, \eta_2 \in C^{\infty}_{0}(0,1)$ consider the test function $\psi^j$ 
with components $\psi^j_1,\psi^j_2$ given by
$$\psi^j_l(x)= \int_0^x |\partial_x \gamma^j(y)|  \eta_l (y) \, d y -x^2 (\alpha_l x +\beta_l)\,,\;\text{for}\;l=1,2, $$
with 
\begin{align*}
\alpha_l  = - 2\int_0^1 |\partial_x \gamma^j(y)| \eta_l (y) \, d y \,, \;
\beta_l  = 3 \int_0^1 |\partial_x \gamma^j(y)| \eta_l (y) \, d y\,,\text{for}\;l=1,2, 
\end{align*}
such that $ \psi^j \in H^2_0((0,1):\mathbb{R}^2)$. Then once again 
$$ |\alpha_l|, |\beta_l| , \| \partial_s \psi^j_l\|_{L^{1}} \leq C \|\eta_l\|_{L^1} \,  ,$$
 and reasoning as before 
 but now using also that $\boldsymbol{\kappa}^{j} \in L^{\infty}$ we find
$$\Big|\int_{0}^{1} 2 \boldsymbol{\kappa}^{j}_l
 \partial_x \eta^{j}_l  \,\mathrm{d}x \Big| \leq C \|\eta_l\|_{L^1} \, ,$$
and hence $\boldsymbol{\kappa}^{j} \in W^{1,\infty}$. 
Proceeding as in the derivation of Euler-Lagrange equations 
we can integrate by parts once and obtain that
\begin{equation*}
 0= - \int_{0}^{1} 2 \langle \partial_s \boldsymbol{\kappa}^{j},
\partial_s\psi^{j} \rangle \,\mathrm{d}s +\int_{0}^{1} (-3 |\boldsymbol{\kappa}^j|^2+\delta)
\left\langle \partial_s\psi^{j},\tau^{j}\right\rangle \,\mathrm{d}s \,,
\end{equation*}
for any $\psi^j \in C^{\infty}_0(0,1)$. That is, reasoning again first for each component 
separately, there exists a constant vector $v$ such that
$$ 2 \partial_s \boldsymbol{\kappa}^{j}- (-3 |\boldsymbol{\kappa}^j|^2+\delta)
\tau^{j} = v \, .$$
A bootstrap argument yields then first that $\gamma^j$ is $H^4$ and then $C^{\infty}$ 
as claimed.

\textit{Boundary terms} 

From~\eqref{secondintegration}
we get that any 
 critical point has to
satisfy
\begin{equation}\label{bcc}
\sum_{i=1}^{M} \left[ 2 \left. \langle \boldsymbol{\kappa}^{i},
 \partial_s \psi^{i}\rangle \right|_0^1  +  \left. 
 \langle -2\partial^\perp_s \boldsymbol{\kappa}^{i}
- |\boldsymbol{\kappa}^i|^2\tau^i+\delta\tau^i, \psi^{i}\rangle
  \right|_0^1 \right] =0\, .
 \end{equation}
Depending on which conditions we impose on the network different 
boundary conditions will be induced.

\textit{A triple junction of three different curves with equal angles}

Let $\Gamma$ be a regular connected network 
only with triple junctions where 
the tangents form equal angles. 
For instance
this is the case 
of Theta--networks. Let $O$ 
be the triple junction 
around which we variate and 
$\gamma^i:[0,1]\to\mathbb{R}^2$, $i\in\{1,2,3\}$,
be the three curves of class $C^\infty$ meeting at $O$. 
Then, 
$$
\gamma^{1}(0)=\gamma^{2}(0)=\gamma^{3}(0)=O\quad\text{and}\quad
\sum_{i=1}^3 \partial_s \gamma^{i}(0) =0\,.
$$
Consider  the 
curves $\psi^{i}:[0,1]\to\mathbb{R}^2$, $i=1,2,3$, in $C^\infty_0([0,1))$
such that
$\psi^{1}(0)=\psi^{2}(0)=\psi^{3}(0)$ and consider the variation network $\tilde{\Gamma}$ 
where the curves $\gamma^i$ are replaced by $\tilde{\gamma}^i:= \gamma^i+t \psi^i$, $i=1,2,3$.
For $|t|$ small this is still a regular network. 

As we want that
at the triple junction $O$
the network
 $\widetilde{\Gamma}$ satisfies also the angular condition
\begin{equation}\label{tangtildazero}
\sum_{i=1}^3\partial_s \tilde{\gamma}^{i}(0)=0\,,
\end{equation}
we have to require something more on the three curves $\psi^{i}$.
Differentiating~\eqref{tangtildazero} it follows that
\begin{equation}\label{diftangzero}
0 = \frac{d}{dt}\left. \sum_{i=1}^3\partial_s \tilde{\gamma}^{i}(0)\right|_{t=0}
= \sum_{i=1}^3\left(\frac{\partial_x\psi^{i}}{\vert\partial_x\gamma^{i} \vert}
-\frac{\partial_x\gamma^{i}}{\vert\partial_x\gamma^{i}\vert^3}
\langle\partial_x\psi^{i},\partial_x\gamma^{i}\rangle \right)(0)
=\sum_{i=1}^3 \partial_s^{\perp} \psi^{i} (0)\,,
\end{equation}
The vectors $\partial_s \psi^{i}$
can be equivalently written as $\partial_s \psi^{i}=\overline{\psi_{s}^{i}}\nu^{i}
+\widetilde{\psi_{s}^{i}}\tau^{i}$, so that $\partial_s^{\perp} \psi^i = \overline{\psi_{s}^{i}}\nu^{i}$. Hence, with this notation, \eqref{diftangzero} becomes 
\begin{equation*}
\sum_{i=1}^3\partial_s^{\perp} \psi^{i} (0) =\sum_{i=1}^3 \overline{\psi_{s}^{i}}(0)\nu^{i}(0)=0\,,
\end{equation*}
that combined with the angular condition 
leads to the following constraint on the variation
\begin{equation}\label{condizioneprimoordine}
 \overline{\psi_{s}^{1}}=\overline{\psi_{s}^{2}}=\overline{\psi_{s}^{3}}
=:\overline{\psi_{s}}\,.
\end{equation}

Now that we have the right variation we can finally derived the
boundary conditions that any critical point in this setting must satisfy at each junction. 
Choosing first variations $\psi^{i} \in C^\infty_0([0,1))$ satisfying $\psi^i(0)=0$, then 
from~\eqref{bcc} it follows that
$$ 0= \sum_{i=1}^{3}    \langle \boldsymbol{\kappa}^{i},
 \partial_s \psi^{i}\rangle(0) = \sum_{i=1}^{3} \langle k^{i} \nu^i,
 \partial_s^{\perp} \psi^{i}\rangle(0) = \sum_{i=1}^{3} \langle k^{i} \nu^i,
 \overline{\psi_{s}} \nu^i\rangle(0)
 =\overline{\psi_{s}}(0) \sum_{i=1}^{3} k^{i} (0)\, , $$
so that  a first boundary condition at the junction is 
$$\sum_{i=1}^{3} k^{i} (0)=0 \, . $$ 
The other term in~\eqref{bcc} leads to another condition. 
Since $\psi^1(0)=\psi^2(0)=\psi^3(0)$ and 
the sum of the tangent vectors is equal to zero at the junction, 
we get
$$ 
\sum_{i=1}^{3} (2 \partial_s k^{i} \nu^i+(k^i)^2 \tau^{i}) (0) =0\,. 
$$

\end{proof}

We derive now the Euler-Lagrange equation for Problem~\eqref{problemdrops}, that is the one satisfied by optimal drops.

\begin{prop}
If a curve $\gamma\in H^2$ is a stationary point for Problem~\eqref{problemdrops},
then it is of class $C^\infty$ and it satisfies
\begin{equation*}
2 (\partial^\perp_s)^2 \boldsymbol{\kappa}+ |\boldsymbol{\kappa}|^2 \boldsymbol{\kappa} - \boldsymbol{\kappa}=0\, \mbox{ on }(0,1),
\end{equation*}
or in terms of the scalar curvature
$$
2\partial_s^2k+k^{3} - k=0\quad\text{for every }\,x\in [0,1]\,,
$$  
together with the boundary conditions
$$
k(0)=k(1)=0\, \mbox{ and } \partial_s k(0)=\pm \partial_s k(1)\,.
$$
\end{prop}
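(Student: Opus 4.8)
The plan is to specialize the proof of the previous Proposition to $N=1$ and $\delta=1$, viewing the coincident endpoints $\gamma(0)=\gamma(1)$ as a single junction at which only \emph{position} is constrained: an admissible variation is $\tilde\gamma=\gamma+t\psi$ with $\psi\in C^\infty([0,1];\mathbb{R}^2)$ satisfying $\psi(0)=\psi(1)$, with \emph{no} condition on $\partial_s\psi(0),\partial_s\psi(1)$ — this is exactly the ``$\psi^i(0)=\psi^i(1)$'' case already allowed in the proof above. For such variations the first variation \eqref{primaver} reads
$$\frac{d}{dt}F(\tilde\gamma)_{|t=0}=\int_0^1 2\langle\boldsymbol{\kappa},\partial_s^2\psi\rangle\,\mathrm{d}s+\int_0^1(-3|\boldsymbol{\kappa}|^2+1)\langle\tau,\partial_s\psi\rangle\,\mathrm{d}s\,,$$
and a critical point is characterized by this quantity vanishing for all admissible $\psi$.

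First I would establish regularity. The bootstrap in the previous Proposition (produce $\boldsymbol{\kappa}\in L^\infty$ by testing with the special $H^2_0$-functions built from $\eta_1,\eta_2\in C_0^\infty(0,1)$, upgrade to $\boldsymbol{\kappa}\in W^{1,\infty}$ with a second choice, then integrate by parts once and iterate) is purely local: it only variates a single curve with test functions supported in the open interval, hence it applies verbatim here and gives $\gamma\in H^4$ and then $\gamma\in C^\infty$. From here $\gamma$ may be assumed smooth, and integrating by parts twice exactly as in the derivation of \eqref{secondintegration} gives, for $\psi\in C_0^\infty((0,1);\mathbb{R}^2)$ so that all boundary terms vanish, $\int_0^1\langle 2\partial_s^2\boldsymbol{\kappa}+(3|\boldsymbol{\kappa}|^2-1)\boldsymbol{\kappa}+6\langle\boldsymbol{\kappa},\partial_s\boldsymbol{\kappa}\rangle\tau,\psi\rangle\,\mathrm{d}s=0$; using $\partial_s^2\boldsymbol{\kappa}=(\partial_s^\perp)^2\boldsymbol{\kappa}-3\langle\partial_s\boldsymbol{\kappa},\boldsymbol{\kappa}\rangle\partial_s\gamma-|\boldsymbol{\kappa}|^2\boldsymbol{\kappa}$ and $(\partial_s^\perp)^2\boldsymbol{\kappa}=\partial_s^2k\,\nu$ this collapses to $2(\partial_s^\perp)^2\boldsymbol{\kappa}+|\boldsymbol{\kappa}|^2\boldsymbol{\kappa}-\boldsymbol{\kappa}=0$, i.e. $2\partial_s^2k+k^3-k=0$ on $(0,1)$.

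For the boundary conditions I would keep the full boundary term \eqref{bcc}, which for $M=1$, $\delta=1$ becomes
$$2\langle\boldsymbol{\kappa}(1),\partial_s\psi(1)\rangle-2\langle\boldsymbol{\kappa}(0),\partial_s\psi(0)\rangle+\langle W(1)-W(0),P\rangle=0\,,$$
where $P=\psi(0)=\psi(1)$ and $W:=-2\partial_s^\perp\boldsymbol{\kappa}-|\boldsymbol{\kappa}|^2\tau+\tau=-2\partial_sk\,\nu+(1-k^2)\tau$; since only $\psi(0)=\psi(1)$ is required, the data $P,\partial_s\psi(0),\partial_s\psi(1)$ can be prescribed freely and independently in $\mathbb{R}^2$. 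Taking $P=0$ and $\partial_s\psi(1)=0$ with $\partial_s\psi(0)$ arbitrary forces $\boldsymbol{\kappa}(0)=k(0)\nu(0)=0$, hence $k(0)=0$, and symmetrically $k(1)=0$. With $k$ vanishing at both endpoints the first two terms drop and $\langle W(1)-W(0),P\rangle=0$ for every $P$, i.e. $W(0)=W(1)$, which with $k(0)=k(1)=0$ is the vector identity $-2\partial_sk(0)\nu(0)+\tau(0)=-2\partial_sk(1)\nu(1)+\tau(1)$; equating Euclidean norms (using $|\tau|=|\nu|=1$, $\tau\perp\nu$) gives $(\partial_sk(0))^2=(\partial_sk(1))^2$, i.e. $\partial_sk(0)=\pm\partial_sk(1)$.

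The only technically heavy step is the regularity bootstrap, and since it is local it can be quoted from the previous Proposition rather than redone. The genuinely new point, and the place to be careful, is the boundary analysis: one must observe that the single constraint $\psi(0)=\psi(1)$ leaves $(P,\partial_s\psi(0),\partial_s\psi(1))$ free in $(\mathbb{R}^2)^3$, and then peel off the conditions in the correct order — first $k(0)=k(1)=0$, which annihilates the $\partial_s\psi$-terms, and only afterwards the matching $W(0)=W(1)$. It is exactly the absence of any angle constraint at the junction (unlike the triple-junction case above) that makes the natural boundary condition only the scalar relation $\partial_sk(0)=\pm\partial_sk(1)$.
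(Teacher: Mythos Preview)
Your argument is correct and follows the paper's proof closely: the same variation $\tilde\gamma=\gamma+t\psi$ with $\psi(0)=\psi(1)$, the same local regularity bootstrap borrowed from the previous Proposition, and the same vector identity $-2\partial_sk(0)\nu(0)+\tau(0)=-2\partial_sk(1)\nu(1)+\tau(1)$ at the junction once $k(0)=k(1)=0$ is established. The one place you diverge is the extraction of $\partial_sk(0)=\pm\partial_sk(1)$ from this vector identity: the paper fixes coordinates $\tau(1)=(1,0)$, writes the identity as a $2\times 2$ linear system for the components of $\tau(0)$, solves it explicitly, and then imposes $|\tau(0)|=1$; you instead simply equate the Euclidean norms of both sides and use $\tau\perp\nu$. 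Your route is shorter and coordinate-free, while the paper's computation has the byproduct of producing the explicit formula for $\tau(0)$ in terms of $\partial_sk(0),\partial_sk(1)$, which encodes the angle at the junction.
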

\begin{proof}
Consider $\gamma^i:[0,1]\to\mathbb{R}^2$ 
 a drop of class $H^2$, i.e. in particular satisfying that $\gamma(0)=\gamma(1)$.
Consider a variation of $\gamma$ defined as 
$\widetilde{\gamma}=\gamma+t\psi$
with $t\in\mathbb{R}$, $\psi:[0,1]\to\mathbb{R}^2$ a $C^\infty$ curve
such that  $\psi(0)=\psi(1)$. 
As in the previous proof, one first computes the weak form of the Euler-Lagrange equation 
and then assuming that $\gamma \in H^4$ derives
\begin{align*}
 \frac{d}{dt}F(\widetilde{\gamma})_{|t=0}
 & =  2 \left. \langle \boldsymbol{\kappa},
 \partial_s \psi\rangle \right|_0^1+
 \left. \langle -2\partial_s \boldsymbol{\kappa}+ (-3 |\boldsymbol{\kappa}|^2+1)\tau,
 \psi\rangle \right|_0^1 \\
& +  \int_{0}^{1} 2 \langle (\partial_s^\perp)^2  \boldsymbol{\kappa},
\psi \rangle \,\mathrm{d}s + \int_{0}^{1} ( |\boldsymbol{\kappa}|^2-1)
 \langle \boldsymbol{\kappa},\psi\rangle \,\mathrm{d}s \,,
\end{align*}
and hence the Euler-Lagrange equation. 
Regularity of a critical point can be proven again taking appropriate test-functions 
in the weak form of the Euler-Lagrange equation.

Let us study now which are the
boundary conditions in this case. Choosing first a variation with $\psi(0)
=\psi(1)=0$ we find that any critical point has to satisfy
$$
\boldsymbol{\kappa}(0)=\boldsymbol{\kappa}(1)=0 \mbox{ or equivalently }k(0)=k(1)=0\,.
$$
Then, since $\psi(1)=\psi(0)$, the second condition becomes 
\begin{equation}\label{seccond}
-2\partial_s k(1)\nu(1)+\tau(1)+2\partial_s k(0)\nu(0)-\tau(0)=0\,.
\end{equation}
Without loss of generality we may fix $\tau(1)=(1,0)$. Let us denote 
$\tau(0)=(\tau_1,\tau_2)$. Then, 
it follows that $\nu(1)=(0,1)$, and $\nu(0)=(-\tau_2,\tau_1)$.
Then the vector equality~\eqref{seccond} is equivalent 
to the following system of two scalar equations:
\begin{equation*}
\begin{cases}
1-2\partial_s k(0)\tau_2-\tau_1=0\\
-2\partial_s k(1)+2\partial_s k(0)\tau_1-\tau_2=0\,.
\end{cases}
\end{equation*}
The unique solution of the system is 
$$\tau(0)=\left( \frac{1+4 \partial_s k(0)\partial_s k(1)}{1+4(\partial_s k(0))^2}, \frac{2\partial_s k(0)-2\partial_s k(1)}{1+4(\partial_s k(0))^2}\right) \, .$$
Finally, since $\tau(0)$ has norm equal to one we obtain the second boundary condition 
that a stationary point has to satisfy, namely
\begin{equation*}
\partial_s k(0)=\pm \partial_s k(1)\,.
\end{equation*}
\end{proof}

\section{Scaling properties}\label{secresc}

We want to study 
what happens when we rescale
 a linear combination, 
with positive coefficients, of the elastic energy and the length or the area.
We recall that 
the elastic energy is homogeneous of degree $-1$, the length of degree $1$ 
and the area of degree $2$.
As it turns out
that many computations do not depend on the exact degree of homogeneity 
but only on the fact that there is some type of homogeneity,
 we do the computations here in a general framework.

From now on in this section
let $\alpha,\beta>0$ and let 
$A,B$ be two geometric functional defined on sufficiently regular 
sets $\Gamma$ in $\mathbb{R}^2$, that we do not wish to further specify but that satisfy $B(\Gamma)>0$. We assume that the functional $A$ and $B$ have the following 
scaling properties
\begin{equation}\label{undici}
A(\lambda \Gamma)=\lambda^{-\alpha} A(\Gamma) \quad\text{and}\quad
B(\lambda \Gamma)=\lambda^\beta B(\Gamma) \mbox{ for any }\lambda >0\,.
\end{equation}
In our application 
$\Gamma$ are regular $N$--networks of class $H^2$, 
$A$ is the elastic energy of $\Gamma$, $B$ its total length 
(or if $\Gamma$ is composed by one Jordan curve $
B$ can be also the area enclosed by the curve), 
$\alpha=1$ and $\beta=1$ (or $\beta=2$). Let $\delta>0$, 
and consider the penalized functional 
$F_\delta(\Gamma):=A(\Gamma)+\delta B(\Gamma)$.

\begin{prob}\label{ppenalized}
Consider
\begin{equation}\label{penalized}
\inf\{F_\delta(\Gamma)=A(\Gamma)+\delta B(\Gamma)\vert\; \Gamma \text{ sufficiently regular subset of}\,\mathbb{R}^2\} \, .
\end{equation}
\end{prob}
We call Problem~\ref{ppenalized} the \emph{penalized} minimization problem.

\medskip
\textbf{\textit{Reduction to the case $\delta=1$ by rescaling}}
\medskip

It holds
\begin{equation}\label{stessaforma}
F_{1}(\Gamma)
=\delta^{\frac{-\alpha}{\alpha+\beta}}F_{\delta}\left(\delta^{\frac{-1}{\alpha+\beta}}\Gamma\right)\,.
\end{equation}
Indeed calling  $\widetilde{\Gamma}$ the rescaled network
$\delta^{\frac{-1}{\alpha+\beta}}\Gamma$, by~\eqref{undici}
we obtain
\begin{equation*}
F_{\delta}(\widetilde{\Gamma})=
A(\widetilde{\Gamma})+\delta B(\widetilde{\Gamma})=
\delta^{\frac{\alpha}{\alpha+\beta}}A(\Gamma)
+\delta \left(\delta^{\frac{-\beta}{\alpha+\beta}}B(\Gamma)\right)
=\delta^{\frac{\alpha}{\alpha+\beta}}\left(A(\Gamma)+B(\Gamma)\right)
=\delta^{\frac{\alpha}{\alpha+\beta}} F_{1}(\Gamma)\,.
\end{equation*}
As a consequence of Formula~\eqref{stessaforma}
if  $\Gamma_1$ is a minimizer for $F_{1}$,
then the rescaled
set $\Gamma_2:=\mathrm{R}\Gamma_1$ with $\mathrm{R}=\delta^{\frac{-1}{\alpha+\beta}}$
is a minimizer for  $F_{\delta}$, and vice versa.


Hence, there is no need of considering the more general penalized minimization problem 
given by the functional $F_{\delta}$ with $\delta>0$, we
fix $\delta=1$
and consider the energy 
$F(\Gamma):=F_1 (\Gamma)$.


\medskip

\textbf{\textit{Optimal rescaling}}

\medskip

We have already  used an argument of optimal rescaling when discussing about 
generalization to different angles of the proofs given in~\cite{danovplu}
(see the argument below Figure~\ref{bubble}).
In general it reads as follows.

\begin{lem}[Optimal rescaling]\label{optimalresc}
Consider $\Gamma$ a sufficiently regular subset of $\mathbb{R}^2$,
and $\widetilde{\Gamma}$ the rescaling of $\Gamma$  by the factor
\begin{equation}\label{dodici}
\lambda_{\rm{opt}}:=
\left(\frac{\alpha A(\Gamma)}{\beta B(\Gamma)}\right)^{\frac{1}{\alpha+\beta}}\,.
\end{equation}
Then for every rescaling of factor $\lambda>0$
\begin{equation*}
F(\widetilde{\Gamma})\leq F(\lambda\Gamma)\,.
\end{equation*}
\end{lem}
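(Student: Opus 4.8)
The plan is to reduce the statement to a one–variable minimization. Writing $g(\lambda):=F(\lambda\Gamma)$ for $\lambda>0$ and using the scaling relations~\eqref{undici} together with $F=F_1=A+B$, one has
\[
g(\lambda)=A(\lambda\Gamma)+B(\lambda\Gamma)=\lambda^{-\alpha}A(\Gamma)+\lambda^{\beta}B(\Gamma)\,.
\]
Since $A(\Gamma)>0$, $B(\Gamma)>0$ and $\alpha,\beta>0$, the function $g$ is smooth on $(0,\infty)$ with $g(\lambda)\to+\infty$ both as $\lambda\to0^{+}$ (because of the term $\lambda^{-\alpha}A(\Gamma)$) and as $\lambda\to+\infty$ (because of $\lambda^{\beta}B(\Gamma)$). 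Hence $g$ attains its minimum at an interior critical point, and it suffices to locate the critical points of $g$.

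Next I would compute $g'(\lambda)=-\alpha\lambda^{-\alpha-1}A(\Gamma)+\beta\lambda^{\beta-1}B(\Gamma)$ and solve $g'(\lambda)=0$, which gives $\lambda^{\alpha+\beta}=\dfrac{\alpha A(\Gamma)}{\beta B(\Gamma)}$, i.e. the unique positive root is exactly $\lambda_{\mathrm{opt}}$ as in~\eqref{dodici}. Uniqueness of the critical point, together with the blow-up of $g$ at both endpoints of $(0,\infty)$, shows that $\lambda_{\mathrm{opt}}$ is the global minimizer of $g$, so $g(\lambda)\ge g(\lambda_{\mathrm{opt}})=F(\widetilde{\Gamma})$ for all $\lambda>0$, which is the claim. (A calculus-free alternative: apply the weighted arithmetic–geometric mean inequality with weights $p=\beta/(\alpha+\beta)$, $q=\alpha/(\alpha+\beta)$ to $X=\lambda^{-\alpha}A(\Gamma)$ and $Y=\lambda^{\beta}B(\Gamma)$; since $X+Y=p(X/p)+q(Y/q)\ge(X/p)^{p}(Y/q)^{q}$ and the exponent of $\lambda$ in $X^{p}Y^{q}$ is $-\alpha p+\beta q=0$, the right-hand side equals the $\lambda$-independent quantity $A(\Gamma)^{p}B(\Gamma)^{q}/(p^{p}q^{q})$, with equality precisely when $X/p=Y/q$, i.e. when $\lambda=\lambda_{\mathrm{opt}}$.)

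The computations involved are routine; the only points requiring a little care are the bookkeeping of exponents — verifying that $-\alpha p+\beta q=0$ and that $g'(\lambda)=0$ indeed simplifies to $\lambda^{\alpha+\beta}=\alpha A(\Gamma)/(\beta B(\Gamma))$ — and noting the degenerate case $A(\Gamma)=0$, in which the formula~\eqref{dodici} is not defined; this borderline situation should be flagged but is not a genuine obstacle, since there $g$ is strictly increasing and the inequality is vacuous. Thus I do not anticipate any serious difficulty in this lemma.
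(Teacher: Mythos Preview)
Your proposal is correct and follows the same approach as the paper: the paper simply writes $F(\lambda\Gamma)=\lambda^{-\alpha}A(\Gamma)+\lambda^{\beta}B(\Gamma)$ and then states that ``by standard computations'' the optimal factor is $\lambda_{\mathrm{opt}}$, which is precisely the one-variable minimization you carry out in detail (with the AM--GM variant as a bonus). Your added remarks on the blow-up at the endpoints and the degenerate case $A(\Gamma)=0$ go beyond what the paper spells out, but the core argument is identical.
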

\begin{proof}
Since by~\eqref{undici}
$F(\lambda\Gamma)=\lambda^{-\alpha} A(\Gamma)+\lambda^\beta B(\Gamma)$, 
by standard computations it is easy to see that the optimal rescaling factor is given by $\lambda=\lambda_{\rm{opt}}$.
\end{proof}

We notice that for each optimal rescaled network, and in particular for  the minimizers of $F$ 
(if there exist), the energy takes the following form
\begin{equation}\label{equie}
F(\lambda_{\rm{opt}}\Gamma)=\left(
\frac{\alpha A(\Gamma)}{\beta B(\Gamma)}\right)^{\frac{-\alpha}{\alpha+\beta}}A(\Gamma)
+\left( \frac{\alpha A(\Gamma)}{\beta B(\Gamma)}\right)^{\frac{\beta}{\alpha+\beta}}B(\Gamma)
=(1+\frac{\alpha}{\beta}) \Big(\frac{\beta}{\alpha}\Big)^{\frac{\alpha}{\alpha+\beta}}A(\Gamma)^{\frac{\beta}{\alpha+\beta}}B(\Gamma)^{\frac{\alpha}{\alpha+\beta}}\,.
\end{equation}
In particular, if $\alpha=\beta$ (as in the case when $A$ is the elastic energy and $B$ the length) there is an equipartition of the energy.

\subsection{Equivalence between the constrained and the penalized problem}\label{secequiv}

Another natural choice in our problem would be to fix the length, instead of penalizing it. 
We speak then of the \textit{constrained minimization problem} 
that in this general framework reads as follows.

\begin{prob}
Consider
\begin{equation}\label{costrp}
\inf\{A(\Gamma)\vert\; \Gamma \text{ sufficiently regular subset of}\;\mathbb{R}^2\,
\text{with}\, B(\Gamma)=B_0\} \,.
\end{equation}
\end{prob}

As a consequence of the rescaling property of the energy, we show now that the penalized and constrained problems are in fact equivalent. 

\begin{lem}\label{probequivalenti}
If $\widetilde{\Gamma}$ is a minimizer for the penalized problem,then 
there exists a rescaling factor $\widetilde{\mathrm{R}}
$ such that
$\widetilde{\mathrm{R}}\widetilde{\Gamma}$
is a minimizer for the constrained problem with the constraint $B(\Gamma)=B_0>0$.
Vice versa if $\overline{\Gamma}$ is a minimizer for the constrained problem,
then there exist a rescaling factor $\overline{\mathrm{R}}$ such that 
$\overline{\mathrm{R}}\,\overline{\Gamma}$ is a minimizer for the penalized problem.
\end{lem}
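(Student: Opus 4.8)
The plan is to exploit the two scaling lemmas already established: the reduction to $\delta=1$ via \eqref{stessaforma} and the optimal-rescaling Lemma~\ref{optimalresc}. The key observation is that the constrained problem \eqref{costrp} with constraint $B(\Gamma)=B_0$ is scale-covariant in a controlled way, while the penalized functional $F$ attains its minimum over each scaling orbit precisely at the optimal factor $\lambda_{\mathrm{opt}}$ given by \eqref{dodici}. So a minimizer of the penalized problem is automatically ``optimally rescaled'', and we can then slide it along its orbit to hit the prescribed value $B_0$; conversely a constrained minimizer can be slid along its orbit to the optimal factor, where it becomes a penalized minimizer.

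First I would set up the bijection between scaling orbits. For any sufficiently regular $\Gamma$ with $B(\Gamma)>0$, by \eqref{undici} the map $\lambda\mapsto B(\lambda\Gamma)=\lambda^\beta B(\Gamma)$ is a strictly increasing bijection of $(0,\infty)$ onto $(0,\infty)$, so there is a unique $\mathrm{R}(\Gamma)>0$ with $B(\mathrm{R}(\Gamma)\Gamma)=B_0$, namely $\mathrm{R}(\Gamma)=(B_0/B(\Gamma))^{1/\beta}$. This gives a well-defined ``normalization'' operator sending each orbit to its unique representative on the constraint surface $\{B=B_0\}$.

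Next, for the forward direction, suppose $\widetilde{\Gamma}$ minimizes $F$. Two facts combine: (i) by Lemma~\ref{optimalresc}, among all rescalings of a fixed set the penalized energy is minimized at $\lambda_{\mathrm{opt}}$, so since $\widetilde\Gamma$ is a global minimizer it must already satisfy $\lambda_{\mathrm{opt}}(\widetilde\Gamma)=1$ (otherwise $\lambda_{\mathrm{opt}}(\widetilde\Gamma)\widetilde\Gamma$ would have strictly smaller energy — here I should be slightly careful that the inequality in Lemma~\ref{optimalresc} is strict away from $\lambda_{\mathrm{opt}}$, which follows from the strict convexity of $\lambda\mapsto\lambda^{-\alpha}A+\lambda^\beta B$ on $(0,\infty)$); and (ii) by \eqref{equie}, the value $F(\lambda_{\mathrm{opt}}\Gamma)$ depends on $\Gamma$ only through the scale-invariant quantity $A(\Gamma)^{\beta/(\alpha+\beta)}B(\Gamma)^{\alpha/(\alpha+\beta)}$. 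Now set $\widetilde{\mathrm R}=\mathrm R(\widetilde\Gamma)$ and $\overline\Gamma:=\widetilde{\mathrm R}\widetilde\Gamma$, which satisfies $B(\overline\Gamma)=B_0$. To see $\overline\Gamma$ minimizes $A$ under the constraint, take any competitor $\Gamma'$ with $B(\Gamma')=B_0$. Its optimal rescaling $\lambda_{\mathrm{opt}}(\Gamma')\Gamma'$ has $F$-value at least $F(\widetilde\Gamma)$ by global minimality; using \eqref{equie} for both $\Gamma'$ and $\overline\Gamma$ (noting $\overline\Gamma$ is itself optimally rescaled since $\widetilde\Gamma$ was, scaling being transitive on orbits), and that $A(\overline\Gamma)^{\beta/(\alpha+\beta)}B(\overline\Gamma)^{\alpha/(\alpha+\beta)}=A(\overline\Gamma)^{\beta/(\alpha+\beta)}B_0^{\alpha/(\alpha+\beta)}$, the inequality $F(\lambda_{\mathrm{opt}}(\Gamma')\Gamma')\ge F(\overline\Gamma)$ reduces after cancelling the common constant and the common power of $B_0$ to $A(\Gamma')\ge A(\overline\Gamma)$. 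Hence $\overline\Gamma$ is a constrained minimizer. The converse direction is symmetric: given a constrained minimizer $\overline\Gamma$ with $B(\overline\Gamma)=B_0$, let $\overline{\mathrm R}=\lambda_{\mathrm{opt}}(\overline\Gamma)$ and check, again via \eqref{equie} and the reduction of the comparison to comparing the scale-invariant products, that $\overline{\mathrm R}\,\overline\Gamma$ minimizes $F$.

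The main obstacle, and the only genuinely delicate point, is bookkeeping the one-dimensional scaling variable correctly: one must be sure that (a) every orbit meets the constraint surface $\{B=B_0\}$ exactly once, (b) the optimal factor $\lambda_{\mathrm{opt}}$ really gives the strict minimum of $F$ along each orbit (strict convexity of $\lambda\mapsto\lambda^{-\alpha}A(\Gamma)+\lambda^{\beta}B(\Gamma)$, which holds since $\alpha,\beta>0$), and (c) the quantity $A(\Gamma)^{\beta/(\alpha+\beta)}B(\Gamma)^{\alpha/(\alpha+\beta)}$ is scale-invariant so that comparing the optimally-rescaled energies is the same as comparing this product, which in turn — once $B$ is pinned to $B_0$ — is the same as comparing $A$. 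Everything else is a short computation. No new ideas beyond the already-proven scaling lemmas are needed; the statement is essentially a formal consequence of homogeneity.
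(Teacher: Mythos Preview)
Your approach is correct in outline and genuinely different from the paper's. You route everything through the scale-invariant quantity $A(\Gamma)^{\beta/(\alpha+\beta)}B(\Gamma)^{\alpha/(\alpha+\beta)}$ via \eqref{equie}, reducing both directions to the observation that minimizing $F$ over all $\Gamma$ is equivalent to minimizing this product, which in turn, once $B$ is pinned, is equivalent to minimizing $A$. The paper instead argues by direct comparison: in the forward direction it rescales an arbitrary constrained competitor to the $B$-level of the penalized minimizer and compares there; in the converse it rescales the constrained minimizer to the $B$-level of an arbitrary competitor and then invokes Lemma~\ref{optimalresc}. The paper's route is slightly more elementary (it does not need \eqref{equie} or the strict-convexity observation), while yours makes the underlying structure more transparent and treats the two directions symmetrically.

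One slip to fix: the claim that ``$\overline\Gamma$ is itself optimally rescaled since $\widetilde\Gamma$ was'' is false. If $\lambda_{\mathrm{opt}}(\widetilde\Gamma)=1$ then a direct computation gives $\lambda_{\mathrm{opt}}(\overline\Gamma)=\widetilde{\mathrm R}^{-1}$, so $\overline\Gamma$ is \emph{not} at the optimal point of its orbit unless $\widetilde{\mathrm R}=1$. Correspondingly, the inequality you want is $F(\lambda_{\mathrm{opt}}(\Gamma')\Gamma')\ge F(\widetilde\Gamma)$, not $\ge F(\overline\Gamma)$. This does not damage the argument: since the right-hand side of \eqref{equie} is scale-invariant, $F(\widetilde\Gamma)$ equals the same constant times $A(\overline\Gamma)^{\beta/(\alpha+\beta)}B(\overline\Gamma)^{\alpha/(\alpha+\beta)}$, and the cancellation of the common $B_0$ factor then yields $A(\Gamma')\ge A(\overline\Gamma)$ exactly as you intend. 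Just rewrite that sentence accordingly.
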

\begin{proof}

It is not restrictive to suppose that 
$\widetilde{\Gamma}$ is a minimizer for the penalized problem
with $\delta=1$ (by Lemma~\ref{stessaforma}), that is
$$
F(\widetilde{\Gamma})=A(\widetilde{\Gamma})+B(\widetilde{\Gamma})
\leq A(\Gamma)+B(\Gamma)
$$
for all admissible sets $\Gamma$. If 
$B(\widetilde{\Gamma})=B_0$, then $\widetilde{\Gamma}$
is clearly  a minimizer for the constrained problem with constraint $B(\Gamma)=B_0$. Indeed
let $\Gamma$ such that $B(\Gamma)=B_0$ then
$$
A(\widetilde{\Gamma})
=A(\widetilde{\Gamma})+B(\widetilde{\Gamma})-B_0
\leq A(\Gamma)+B(\Gamma)-B_0=A(\Gamma)\,.
$$
Otherwise, $B(\widetilde{\Gamma})=\rho B_0$ for some $\rho \ne 1$.  
We claim that $\rho^{-\frac{1}{\beta}} \widetilde{\Gamma}$ is a minimizer of the constrained problem. Indeed let $\Gamma$ such that $B(\Gamma)=B_0$ then since $B(\rho^{\frac{1}{\beta}}\Gamma)= \rho B_0$ with the argument above it follows
$$ A(\rho^{-\frac{1}{\beta}} \tilde{\Gamma})= \rho^{\frac{\alpha}{\beta}} A(\tilde{\Gamma}) \leq \rho^{\frac{\alpha}{\beta}} A(\rho^{\frac{1}{\beta}}\Gamma)= A(\Gamma) \,, $$
that is what we claimed.

Suppose instead that $\overline{\Gamma}$ is a minimizer for the constrained problem
with $B(\Gamma)=B_0$. We want to show that $\overline{\mathrm{R}}\,\overline{\Gamma}$ 
with $\overline{\mathrm{R}}$ given by the optimal rescaling~\eqref{dodici} is
a minimizer for the penalized problem.
Let $\Gamma$ be a competitor for $F$ with $B(\Gamma)=C_0$, then there exist $\lambda>0$
such that $B(\lambda\overline{\Gamma})=C_0$. By the same computations done above $A(\lambda\overline{\Gamma})\leq A(\Gamma)$. 
Using Lemma~\ref{optimalresc} we get
$$
A(\overline{\mathrm{R}}\,\overline{\Gamma})+B(\overline{\mathrm{R}}\,\overline{\Gamma})
\leq A(\lambda\overline{\Gamma})+B(\lambda\overline{\Gamma})=
A(\lambda\overline{\Gamma})+C_0\leq A(\Gamma) +C_0 = 
A(\Gamma)+B(\Gamma)\,,
$$
for all admissible $\Gamma$.
\end{proof}

\section{The energy of the \lq\lq Figure Eight\rq\rq}\label{bulgari}

The proof in \cite{danovplu} that an optimal Theta--network is not degenerate 
is based on a good approximation of the energy of the optimal rescaling of the
 \lq\lq Figure Eight\rq\rq~$\mathcal{F}$, 
 the unique (up to multiple coverings) closed elastica with a self-intersection, 
 as we have discussed already in Subsection~\ref{theta}. 
In this section we give the arguments that lead us to the value $F(\mathcal{F})\approx 21.2075$.

Langer and Singer already observed in \cite{langsing} that the Euler-Lagrange equation
 of the elasticae 
$$2 \partial_{ss} k+\left(k\right)^{3} -\delta k=0 \mbox{ on }(0,1)\, ,$$
(see \eqref{eulero}) can be integrated using Jacobi-Elliptic functions. 
In~\cite{explicitelasticae} the authors find a dynamical system 
that the components $(x,y)$ of an elastica parametrized by arc-length satisfy. 
From this dynamical system description one is able to find an explicit parametrization 
of the \lq\lq Figure Eight\rq\rq ~depending only on well defined parameters. 
Thanks to this representation we are able to compute in this section the energy of $\mathcal{F}$.

\subsection{From the equation of the elasticae to a dynamical system}
For completeness we give here the details of the derivation 
of the dynamical systems for the elasticae 
presented  in~\cite{explicitelasticae}.

\begin{prop}\label{daeqasist}
Consider $\delta\in\mathbb{R}$, $\delta>0$ fixed and $I$ an interval.
Let $\gamma:I\to \mathbb{R}^2$, 
$\gamma= (x,y)^t 
$
be a smooth regular curve parametrized by arc-length and let $k$ denote its scalar curvature. 
If $\gamma$ is an elastica, that is a solution of 
\begin{equation}\label{zero}
2 \partial_s^2 k+k^{3} -\delta k=0\,,
\end{equation}
then up to isometries of $\mathbb{R}^2$,
for some $\lambda\in\mathbb{R}$,
$\lambda> 0$
 the couple of its coordinates $(x,y)^t$ satisfies
\begin{equation}\label{sistema}
\begin{cases}
x''=\lambda yy'\\
y''=-\lambda yx'\,.
\end{cases}
\end{equation}
\end{prop}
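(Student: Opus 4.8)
The plan is to reduce the statement to a claim about the curvature alone and then exploit the two classical Noether first integrals of the elastica equation coming from the translational and rotational invariance of the energy. Since $\gamma=(x,y)^{t}$ is parametrised by arc length, we have $\tau=\partial_{s}\gamma=(x',y')$ with $|\tau|=1$, $\nu=(-y',x')$, and $\partial_{s}\tau=\boldsymbol{\kappa}=k\nu$, so that $x''=-ky'$ and $y''=kx'$. Consequently \eqref{sistema} holds for some $\lambda>0$ precisely when $-ky'=\lambda yy'$ and $kx'=-\lambda yx'$, i.e. when $(k+\lambda y)\tau=0$; as $|\tau|=1$ this is equivalent to $k=-\lambda y$. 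Hence it suffices to prove that, after a rigid motion of $\mathbb{R}^{2}$, the scalar curvature of $\gamma$ equals $-\lambda y$ for some constant $\lambda>0$.

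First I would verify the \emph{force} first integral: the vector field $\vec F:=2(\partial_{s}k)\,\nu+(k^{2}-\delta)\,\tau$ is constant along $\gamma$. Differentiating and using $\partial_{s}\nu=-k\tau$ and $\partial_{s}\tau=k\nu$ from \eqref{dercurvatura}, together with \eqref{zero} rewritten as $\partial_{s}^{2}k=\tfrac{\delta}{2}k-\tfrac{1}{2}k^{3}$, one sees that the tangential part of $\partial_{s}\vec F$ cancels identically while the normal part cancels exactly by virtue of \eqref{zero}. Next I would derive the \emph{torque} first integral. With $\vec F=(F_{x},F_{y})$ now constant, a direct computation using $(x')^{2}+(y')^{2}=1$ and the explicit form of $\vec F$ gives $\frac{d}{ds}(xF_{y}-yF_{x})=x'F_{y}-y'F_{x}=2\,\partial_{s}k$, whence $T:=xF_{y}-yF_{x}-2k$ is a constant; equivalently $2k=xF_{y}-yF_{x}-T=\langle(x,y),\vec F^{\perp}\rangle-T$, where $\vec F^{\perp}:=(F_{y},-F_{x})$ has the same length as $\vec F$.

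Assume $\vec F\neq 0$. Translating $\gamma$ by a vector $\vec c$ with $\langle\vec c,\vec F^{\perp}\rangle=T$ and then rotating so that $\vec F^{\perp}$ points in the direction $(0,-1)$ — rigid motions that leave $k$ (hence the validity of \eqref{zero}) unchanged and act linearly on $\vec F$ and $\vec F^{\perp}$ — we obtain in the new coordinates $2k=\langle(x,y),(0,-|\vec F|)\rangle=-|\vec F|\,y$. Therefore $k=-\lambda y$ with $\lambda:=|\vec F|/2>0$ (a positive constant, since $|\vec F|^{2}=4(\partial_{s}k)^{2}+(k^{2}-\delta)^{2}$ is precisely the usual first integral of the elastica equation), and then $x''=-ky'=\lambda yy'$ and $y''=kx'=-\lambda yx'$, which is \eqref{sistema}.

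The remaining case is $\vec F=0$, in which $\langle\vec F,\tau\rangle=k^{2}-\delta=0$ and $\langle\vec F,\nu\rangle=2\,\partial_{s}k=0$, so $k\equiv\pm\sqrt{\delta}$ and $\gamma$ is a circular arc; such a curve cannot satisfy \eqref{sistema} (a nonzero constant curvature is never of the form $-\lambda y$), so this degenerate situation must be understood as tacitly excluded from the statement — harmless here, the application being the non-circular ``Figure Eight''. I do not expect any real difficulty beyond this: the computations are elementary, and the only points demanding care are the two first-integral verifications and the linear behaviour of $\vec F$ and $\vec F^{\perp}$ under the normalising rigid motions.
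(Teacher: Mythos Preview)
Your proof is correct, but it follows a genuinely different route from the paper's. The paper does not pass through the Noether integrals; instead it integrates \eqref{zero} once to the scalar first integral $(\partial_{s}k)^{2}+(\tfrac{k^{2}}{2}-\tfrac{\delta}{2})^{2}=b+\tfrac{\delta^{2}}{4}=:\lambda^{2}$, then \emph{constructs} a new arc-length curve $\tilde\gamma=(x,y)^{t}$ by setting $y:=-k/\lambda$ and $x':=\tfrac{1}{\lambda}(\tfrac{k^{2}}{2}-\tfrac{\delta}{2})$, verifies that $\tilde\gamma$ satisfies \eqref{sistema}, and finally checks that the curvature of $\tilde\gamma$ equals $k$, so $\tilde\gamma$ coincides with $\gamma$ up to an isometry. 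Your argument, by contrast, stays with the original curve and uses the force integral $\vec F=2(\partial_{s}k)\nu+(k^{2}-\delta)\tau$ together with the torque integral to obtain directly $2k=\langle\gamma,\vec F^{\perp}\rangle-T$; a translation kills $T$ and a rotation aligns $\vec F^{\perp}$ with $(0,-1)$, yielding $k=-\lambda y$ with $\lambda=|\vec F|/2$. The two approaches match perfectly at the level of constants, since $|\vec F|^{2}=4(\partial_{s}k)^{2}+(k^{2}-\delta)^{2}=4\lambda^{2}$ is exactly four times the paper's scalar first integral. What your approach buys is a conceptual explanation of the system \eqref{sistema} as the Noether conservation laws of the elastica functional, and it makes the r\^ole of the rigid motion transparent; the paper's approach is more self-contained and constructive, and it feeds directly into the explicit Jacobi-elliptic parametrisation that follows. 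Your observation about the circular case $\vec F=0$ is apt: the paper's construction also silently breaks down there (one gets $\lambda=0$ and then divides by $\lambda$), so both proofs tacitly exclude circles, which is harmless for the intended application.
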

\begin{proof}
Multiplying~\eqref{zero} by $\partial_s k$ and integrating we find that there exists
a constant $b\in\mathbb{R}$ such that
\begin{equation*}
(\partial_s k)^2+\frac14 k^4-\frac\delta2k^2=b\,,
\end{equation*} 
or equivalently
\begin{equation}\label{uno}
(\partial_s k)^2+\left(\frac{k^2}{2} -\frac\delta2\right)^2= b+\frac{\delta^2}{4}\,.
\end{equation}
Now let consider $\lambda^2=b+\frac{\delta^2}{4}$ and 
the regular, smooth curve $\tilde{\gamma}:I\to\mathbb{R}^2$,
$\tilde{\gamma}= (x,y)^t$ 
with
\begin{align}
y(s)&=-\frac{k(s)}{\lambda}\label{due}\\
x'(s)&=\frac{1}{\lambda}\left(\frac{\lambda^2}{2}y^2(s)-\frac\delta2\right)
=\frac{1}{\lambda}\left(\frac{k^2(s)}{2}-\frac\delta2\right)\,.\label{tre}
\end{align}
By~\eqref{uno} it follows that
\begin{equation*}
\left(x'\right)^2+\left(y'\right)^2=
\frac{1}{\lambda^2}\left(\frac{k^2}{2}-\frac\delta2\right)^2+
\frac{\left(\partial_s k\right)^2}{\lambda^2}
=\frac{1}{\lambda^2}\left(b+\frac{\delta^2}{4}\right)=1\,,
\end{equation*}
that is $\tilde{\gamma}$ is parametrized by arclength. Moreover, the components of $\tilde{\gamma}$ satisfy 
the ODE system~\eqref{sistema}: indeed the first equation
follows differentiating~\eqref{tre}, while for the other using~\eqref{due}, 
\eqref{zero} and~\eqref{tre} we find
\begin{align*}
y''&=\left(-\frac{\partial_s k}{\lambda}\right)' = -\frac{\partial_s^2 k}{\lambda}=-\frac1\lambda\left(-\frac12 k^3+\frac\delta2 k\right)\\
&=\frac{k}{\lambda}	\left(\frac12 k^2-\frac\delta2\right)=kx'=-\lambda yx'\,.
\end{align*}
Now we compute the scalar curvature $\tilde{k}$ 
of the curve $\tilde{\gamma}$. Since $\tilde{\gamma}$ is parametrized by arclength
we have $\tilde{k}=x'y''-x''y'$, then 
\begin{align*}
\tilde{k}&=-\lambda y(x')^2-\lambda y(y')^2=-\lambda y=k\,,
\end{align*}
so, up to isometries of $\mathbb{R}^2$, we obtained that the curve $\tilde{\gamma}$
is in fact the curve $\gamma$.
\end{proof}

\begin{lem}\label{dasistaeq}
Let $I$ be an interval and $\gamma:I\to \mathbb{R}^2$, 
$\gamma=(x,y)^t
$
be a smooth regular curve 
parametrized by arclength,
such that  $(x,y)$ satisfies \eqref{sistema} 
for some $\lambda>0$.
Then the curvature of $\gamma$ is given by $k=-\lambda y$
and $\gamma$ solves 
$2k_{ss}+k^{3} -\delta k=0$ for $\delta=-2\lambda\mu$ for some $\mu\in\mathbb{R}$.
\end{lem}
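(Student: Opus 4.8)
The plan is to reverse the construction carried out in the proof of Proposition~\ref{daeqasist}. First I would read off the curvature directly: since $\gamma=(x,y)^t$ is parametrized by arclength, its signed curvature is $k=x'y''-x''y'$, and substituting the system~\eqref{sistema} gives
\begin{equation*}
k=x'\,(-\lambda y x')-(\lambda y y')\,y'=-\lambda y\big((x')^2+(y')^2\big)=-\lambda y\,,
\end{equation*}
where in the last step I used the unit-speed relation $(x')^2+(y')^2=1$. This already yields the asserted identity $k=-\lambda y$.

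Next I would exhibit the constant $\mu$. Guided by relation~\eqref{tre}, the natural candidate for a first integral of~\eqref{sistema} is
\begin{equation*}
\mu:=x'-\frac{\lambda}{2}\,y^2\,.
\end{equation*}
Differentiating and using the first equation of~\eqref{sistema} one sees at once that $\frac{d}{ds}\mu=x''-\lambda y y'=\lambda y y'-\lambda y y'=0$, so $\mu$ is a real constant. Writing $y=-k/\lambda$, hence $y^2=k^2/\lambda^2$, this rearranges into
\begin{equation*}
x'=\mu+\frac{\lambda}{2}y^2=\frac{k^2-\delta}{2\lambda}\qquad\text{with }\;\delta:=-2\lambda\mu\,.
\end{equation*}

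Finally I would substitute this into the equation for $k$. Differentiating $k=-\lambda y$ twice and using the second equation of~\eqref{sistema} gives $k_{ss}=-\lambda y''=\lambda^2 y x'=-\lambda k x'$, and therefore
\begin{equation*}
2k_{ss}+k^{3}-\delta k=-2\lambda k x'+k^{3}-\delta k=-k(k^{2}-\delta)+k^{3}-\delta k=0\,,
\end{equation*}
which is precisely the claimed equation~\eqref{zero} with $\delta=-2\lambda\mu$. I do not expect any genuine obstacle here: the whole argument reduces to elementary manipulations of the ODE system together with the unit-speed relation. The only step that is not purely mechanical is recognising the conserved quantity $\mu$, but this is forced by inverting formula~\eqref{tre}, so even that amounts to bookkeeping.
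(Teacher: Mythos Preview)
Your proof is correct and follows essentially the same route as the paper: both compute $k=-\lambda y$ directly from the system and the unit-speed relation, both identify the first integral $x'=\tfrac{\lambda}{2}y^2+\mu$ (the paper by integrating the first equation of~\eqref{sistema}, you by differentiating the candidate $\mu$), and both substitute into the second equation to recover~\eqref{zero} with $\delta=-2\lambda\mu$. The only differences are cosmetic---you pass to the variable $k$ slightly earlier---so there is nothing further to add.
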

\begin{proof}
By definition of curvature and using the fact that the components of $\gamma$ 
solve the system~\eqref{sistema} we get
\begin{equation*}
k=x'y''-x''y'=-\lambda y \left(x'\right)^2-\lambda y\left(y'\right)^2=-\lambda y\,.
\end{equation*}
Moreover integrating the first equation of the system~\eqref{sistema}
it follows $x'=\frac{\lambda}{2}y^2+\mu$ for some $\mu\in\mathbb{R}$,
that inserted in the second equation gives
\begin{align*}
2 \partial_s^2 k=-2\lambda y''=2\lambda^2 yx'=2\lambda^2y\left(\frac\lambda2 y^2+\mu\right)=
\lambda^3 y^3+2\lambda^2\mu y=-k^3-2\lambda\mu k\,.
\end{align*}
Hence choosing $\mu=-\frac{\delta}{2\lambda}$ the claim follows.
\end{proof}

\subsection{Parametrization and energy of the   \lq\lq Figure Eight\rq\rq}

Consider a regular smooth curve $\gamma=(x,y)^t$ parametrized by arclength and
 with coordinates satisfying  the system~\eqref{sistema} for some $\lambda>0$. Then,
\begin{align}\label{propxprimo}
&x'=\frac\lambda2 y^2+\mu\quad\text{for some}\;\mu\in\mathbb{R}\,,\\
\label{quattro}
\text{and }&y''+\frac{1}{2}\lambda^2 y^3+\lambda\mu y=0\,.
\end{align}
Multiplying~\eqref{quattro} by $y'$ and integrating we get
\begin{equation*}
\left(y' \right)^2+\frac{\lambda^2}{4}y^4+\lambda\mu y^2 =\left(y' \right)^2+ \left(x' \right)^2-\mu^2 =c
\end{equation*}
with $c\in\mathbb{R}$.
Since $\gamma$ is parametrized by arclength $c=1-\mu^2$.
Hence 
\begin{equation}\label{venti}
\left(y' \right)^2=\frac{\lambda^2}{4}\left(\frac{2(1-\mu)}{\lambda}-y^2\right)
\left(y^2+\frac{2(1+\mu)}{\lambda}\right)\,,
\end{equation}
and we see that necessarily $\mu<1$.

The equation~\eqref{venti} can be integrated
as it has already been done by Langer and Singer~\cite{langsing1, langsing}.
In the following we briefly recall
the definition of  Jacobi--Elliptic functions
(see~\cite{abra})
 needed to integrate~\eqref{venti}.

\begin{defn}\label{jef}
For $m \in [0,1]$ and $\varphi\in \mathbb{R}$ let
$$
u=\int_0^\varphi \frac{d\theta}{\left(1-m\sin^2\theta\right)^\frac{1}{2}}\,.
$$
One considers then the inverse of this function. 
The angle $\varphi:={\rm{am}}(u,m)$ is called \emph{amplitude}.
Moreover ${\rm{cn}}(u,m):=\cos\varphi$.
With a little abuse of notation we define the \emph{elliptic integral of the second kind} as
$$
E(\varphi, m)=E(u,m)=\int_0^\varphi \left(1-m\sin^2\theta\right)^\frac12\,d\theta
=(1-m)u+m\int_{0}^u \rm{cn}^2(w,m)\,dw\,,
$$
where the relation between $u$ and $\varphi$ is the same as before.
Moreover we introduce the real quarter period $K(m)$ defined by
$$
K(m)=\int_0^\frac\pi2 \frac{d\theta}{\left(1-m\sin^2\theta\right)^\frac{1}{2}}\,.
$$
\end{defn}

Now we are ready to integrate \eqref{venti}.
Depending on the value of the parameter $\mu$ in~\eqref{venti} 
we get different solutions. 
As we are considering only the case $\delta=1$, 
by Lemma~\ref{dasistaeq}, we are interested only on negative $\mu$
and on values of $\lambda$ such that $-2 \lambda \mu=1$.
For $\mu \in (-1,0)$~\eqref{venti} can be integrated. One finds 
\begin{equation}\label{y}
y(s)=a\, {\rm{cn}}(\sqrt{\lambda}s,m)
\end{equation}
with $a^2=\frac{2(1-\mu)}{\lambda}$,  $m=\frac{1-\mu}{2}$.
Notice that here we fix the axes in such a way that $y(0)$ is the maximal value of $y$.
Using~\cite[pag.  573--574]{abra} one sees that $y$ as defined in~\eqref{y}
satisfies~\eqref{venti}.

Integrating~\eqref{propxprimo} with  $y$ as defined in~\eqref{y}, one obtains
\begin{equation}\label{x}
x(s)=\frac{2}{\sqrt{\lambda}} E({\rm{am}} (\sqrt{\lambda}s,m),m)-s\,.
\end{equation} 
Using the properties of $E$~\cite[pag.  589]{abra}
it is easy to see $x(0)=0$, $x'(0)=1$, $x'(s)$ satisfies~\eqref{propxprimo}
and $x$ is an odd function.

A proper rescaled \lq\lq Figure Eight\rq\rq ~
satisfies the equation of the elasticae for $\delta=1$
and hence, from Proposition~\ref{daeqasist} 
we know that its coordinates solve the system~\eqref{sistema} for some $\lambda$ (and hence some $\mu$). Since by \cite{langsing1, langsing, explicitelasticae}~$\mathcal{F}$ is characterised as the only closed elastica with at least a self-intersection (up to multiple coverings), if we show that for a value of $\mu \in (-1,0)$ the curve $(x(s),y(s))^t$ (with $x$ and $y$ as in \eqref{x}, \eqref{y}) is closed and has a self-intersection, we are sure that this is a parametrization of~$\mathcal{F}$.  

By properties of the Jacobi elliptic functions, $y$ defined in \eqref{y} is even, periodic with minimal period $4K(m)/\sqrt{\lambda}$, monotonically decreasing for $s \in (0,2K(m)/\sqrt{\lambda})$ and
$$
y\left(\frac{K(m)}{\sqrt{\lambda}}\right)=0 
\mbox{ and } y\left(2\frac{K(m)}{\sqrt{\lambda}}\right)= - y(0) \, .
$$ 
Now we argue that for a unique value of $m$ and hence of $\mu$, $x$ defined in \eqref{x} is periodic with period $2K(m)/\sqrt{\lambda}$ with $x(0)=0=x(K(m)/\sqrt{\lambda})$. 
Indeed, one first observes that
$$ {\rm{am}} (\sqrt{\lambda}s + 2 K(m),m) = {\rm{am}} (\sqrt{\lambda}s ,m)+ \pi \, ,$$
and
$$E(\varphi +\pi,m) = \int_0^{\pi} (1-m (\sin\theta)^2)^{\frac12} d \theta + E(\varphi ,m) = 2 E(m) + E(\varphi ,m) ,$$
with 
$$E(m)= \int_0^{\frac{\pi}{2}} (1-m (\sin\theta)^2)^{\frac12} d \theta \, . $$
If follows that
\begin{align*}
x(s+2K(m)/\sqrt{\lambda}) & =\frac{2}{\sqrt{\lambda}} E({\rm{am}} (\sqrt{\lambda}s + 2 K(m),m),m)-s-2\frac{K(m)}{\sqrt{\lambda}}\\
& = \frac{2}{\sqrt{\lambda}} E({\rm{am}} (\sqrt{\lambda}s ,m)+ \pi,m)-s-2\frac{K(m)}{\sqrt{\lambda}} \\
& = \frac{2}{\sqrt{\lambda}} E({\rm{am}} (\sqrt{\lambda}s ,m),m) +\frac{4}{\sqrt{\lambda}} E(m) -s-2\frac{K(m)}{\sqrt{\lambda}}\\
& = x(s) +\frac{2}{\sqrt{\lambda}} (2 E(m)-K(m)) \, .
\end{align*}
Since $E$ is decreasing in $m$ and $K$ is increasing in $m$ 
we see that there is exactly a value $\overline{m}$ of $m$ between $0$ and $1$ 
such that the claimed periodicity follows. 
By numerical approximation
we obtain that the value is $\overline{m}\approx0,826115$, 
and consequently we get the value of $\mu$ from the relation $\overline{m}=\frac{1-\mu}{2}$.
As we are looking at $F(\mathcal{F})$, $\delta=1$, we get the value of 
$\lambda$ from the relation $1=-2\lambda\mu$ expressed in Lemma~\ref{dasistaeq}.

          \begin{figure}[H]
\begin{center}
\begin{tikzpicture}
\draw[black!50!white, shift={(1.5,0)}, scale=1, rotate=-90]
(0,0)to[out= -45,in=135, looseness=1] (0.1,-0.1)
(0,0)to[out= -135,in=45, looseness=1] (-0.1,-0.1)
(0,-0.3)to[out= 90,in=-90, looseness=1] (0,0.003);
\draw[black!50!white, shift={(0,2.5)}, scale=1, rotate=0]
(0,0)to[out= -45,in=135, looseness=1] (0.1,-0.1)
(0,0)to[out= -135,in=45, looseness=1] (-0.1,-0.1)
(0,-0.3)to[out= 90,in=-90, looseness=1] (0,0.003);
\draw[black!50!white]
(-0.75,0)--(1.5,0)
(0,-0.75)--(0,2.5);
\draw
(0,1.73)to[out= 0,in=90, looseness=1] (0.56,1.1)
(0.56,1.1)to[out= -90,in=50, looseness=1] (0,0);
\path[font=\footnotesize, shift={(0,0)}] 
(0,1.9)[right] node{$\left(x(0),y(0)\right)$}
(0,-0.3)[right] node{$\left(x(\overline{s}),y(\overline{s})\right)$};
 \end{tikzpicture}
\end{center}  

\caption{A piece of the \lq\lq Figure Eight\rq\rq.}\label{quartootto}

          \end{figure}
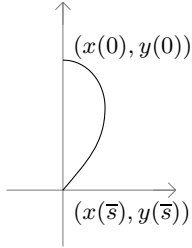
					
We can finally compute the energy of the \lq\lq Figure Eight\rq\rq.
					
\begin{prop}
Consider the optimal rescaling $\mathcal{F}$ of the \lq\lq Figure Eight\rq\rq. Then, $F(\mathcal{F})\approx 21.2075$.
\end{prop}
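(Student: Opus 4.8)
The plan is to compute $F(\mathcal{F})=E(\mathcal{F})+L(\mathcal{F})$ directly from the explicit parametrization $\gamma=(x,y)^t$ of \eqref{x}--\eqref{y}, specialized to the modulus $\overline m\approx 0.826115$ singled out above. First I would record that this curve is already the \emph{optimal} rescaling of the ``Figure Eight'': fixing $m=\overline m$, $a^2=\frac{2(1-\mu)}{\lambda}=\frac{4\overline m}{\lambda}$, $\mu=1-2\overline m$ and $\lambda=\frac{1}{4\overline m-2}$ (the last from $\delta=-2\lambda\mu=1$ in Lemma~\ref{dasistaeq}), the curve solves the elastica equation \eqref{eulero} with $\delta=1$, hence is a critical point of $F=F_1$; being in particular stationary under dilations $\gamma\mapsto(1+t)\gamma$ it satisfies the equipartition $E(\mathcal{F})=L(\mathcal{F})$, which by Lemma~\ref{optimalresc} (with $\alpha=\beta=1$) is exactly the condition $\lambda_{\mathrm{opt}}=1$ characterizing the optimally rescaled representative of its shape. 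That this shape is the ``Figure Eight'' is precisely what the paragraphs preceding the statement establish, via the Langer--Singer classification.

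Next I would evaluate the two integrals. Since $\gamma$ is parametrized by arclength and, by the period and symmetry analysis above, the whole ``Figure Eight'' is traced exactly once as $s$ runs over an interval of length $4K(\overline m)/\sqrt\lambda$ — its two loops being the two half-periods of $y=a\,{\rm{cn}}(\sqrt\lambda s,\overline m)$, on each of which $x$ is $2K(\overline m)/\sqrt\lambda$-periodic by the choice of $\overline m$ — one gets at once
$$
L(\mathcal{F})=\frac{4K(\overline m)}{\sqrt\lambda}\,.
$$
For the elastic energy I would substitute $k=-\lambda y=-\lambda a\,{\rm{cn}}(\sqrt\lambda s,\overline m)$ (Lemma~\ref{dasistaeq}), change variables to $w=\sqrt\lambda s$, use that ${\rm{cn}}^2(\cdot,\overline m)$ has period $2K(\overline m)$, and invoke the identity $E(\overline m)=(1-\overline m)K(\overline m)+\overline m\int_0^{K(\overline m)}{\rm{cn}}^2(w,\overline m)\,dw$ from Definition~\ref{jef}; this yields
$$
E(\mathcal{F})=4\overline m\sqrt\lambda\int_0^{4K(\overline m)}{\rm{cn}}^2(w,\overline m)\,dw=16\sqrt\lambda\bigl(E(\overline m)-(1-\overline m)K(\overline m)\bigr)\,.
$$
Summing and using $\lambda=\frac{1}{4\overline m-2}$ exhibits $F(\mathcal{F})$ as an explicit function of $\overline m$; as a consistency check, $E(\mathcal{F})=L(\mathcal{F})$ is equivalent to the closing relation $2E(\overline m)=K(\overline m)$, so one may just as well write $F(\mathcal{F})=8K(\overline m)/\sqrt\lambda$. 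Plugging in $\overline m\approx 0.826115$ (so $\lambda\approx 0.7666$, $K(\overline m)\approx 2.321$, $E(\overline m)\approx 1.160$) gives $E(\mathcal{F})\approx L(\mathcal{F})\approx 10.60375$, and hence $F(\mathcal{F})\approx 21.2075$.

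The genuinely delicate points are the two book-keeping steps. First, one must verify that the chosen $s$-interval covers the ``Figure Eight'' exactly once, not a proper sub-arc and not a multiple cover: this is where the relations ${\rm{am}}(u+2K(\overline m))={\rm{am}}(u)+\pi$, ${\rm{cn}}(u+2K(\overline m))=-{\rm{cn}}(u)$, the evenness of $y$ and oddness of $x$, and the fact that the crossing point is attained precisely at the two interior arclength parameters where $y=0$, all come into play. Second, one must be careful with the several normalizations in circulation for $K$, $E$ and the Jacobi elliptic functions, so that the identity relating $\int{\rm{cn}}^2$ to $E(\overline m)$ and $K(\overline m)$ is applied in the convention of Definition~\ref{jef}. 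Once these are settled, the numerical evaluation of $K(\overline m)$ and $E(\overline m)$ is routine and produces the claimed value.
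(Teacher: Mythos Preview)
Your proposal is correct and follows essentially the same route as the paper: exploit the arclength parametrization \eqref{x}--\eqref{y}, use $k=-\lambda y$ from Lemma~\ref{dasistaeq}, and integrate $k^2$ over one full period $4K(\overline m)/\sqrt\lambda$. The paper's own proof is in fact sparser than yours: it simply writes $E(\mathcal F)=4\int_0^{\overline s}\lambda^2a^2\,{\rm cn}^2(\sqrt\lambda s,\overline m)\,ds$ and $L(\mathcal F)=4\overline s$ with $\overline s=K(\overline m)/\sqrt\lambda$, and then invokes ``computer assisted computations'' for the final value, without the closed-form reduction to $K(\overline m)$ and $E(\overline m)$, the explicit formula $F(\mathcal F)=8K(\overline m)/\sqrt\lambda$, or your equipartition argument identifying the given parametrization as the optimal rescaling.
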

\begin{proof}
Let $\overline{s}:=K(\overline{m})/\sqrt{\lambda}$. We already know that
$x(0)=x(\overline{s})=0$, $y(0)=y_{\max}$, $y(\overline{s})=0$
and considering values of $s$ between $0$ and $\overline{s}$ 
we parametrize a quarter of the \lq\lq Figure Eight\rq\rq~see Figure \ref{quartootto}.
Again from Lemma~\ref{dasistaeq} (cfr.~\cite{explicitelasticae}) we know 
that $k(s)=-\lambda y(s)$,
hence 
$$
E(\mathcal{F})=\int_{\mathcal{F}}k^2\,\mathrm{d}s=4\int_0^{\overline{s}}k^2\,\mathrm{d}s=
4\int_0^{\overline{s}}\lambda^2a^2cn^2(\sqrt{\lambda}s,\overline{m})\,\mathrm{d}s
$$
and
$$
L(\mathcal{F})=4\overline{s}\,.
$$
By computer assisted computations
we get 
$E(\mathcal{F})\approx21.2075$.
\end{proof}


%
%
%
%
%
\end{document}